\def\rr{{\mathbb R}}
\def\mr{{\mathcal R}}
\def\rn{{{\rr}^n}}
\def\fz{\infty}
\def\boz{{\Omega}}
\def\ls{\lesssim}
\def\bint{{\ifinner\rlap{\bf\kern.25em--}
\int\else\rlap{\bf\kern.45em--}\int\fi}\ignorespaces}
\def\bbint{{\ifinner\rlap{\bf\kern.25em--}
\hspace{0.078cm}\int\else\rlap{\bf\kern.45em--}\int\fi}\ignorespaces}
\def\wp{{W^{1,p}(\boz)}}
\long\def\colred#1\endred{{\color{red}#1}}
\long\def\colgreen#1\endgreen{{\color{green}#1}}
\long\def\colmagenta#1\endmagenta{{\color{magenta}#1}}
\long\def\colblue#1\endblue{{\color{blue}#1}}
\long\def\colyellow#1\endyellow{{\color{yellow}#1}}
\def\r{\right}
\def\lf{\left}
\newtheorem{thm}{Theorem}[section]
\newtheorem{lem}{Lemma}[section]
\newtheorem{prop}{Proposition}[section]
\newtheorem{defn}{Definition}[section]
\numberwithin{equation}{section}
\begin{document}

\arraycolsep=1pt

\title{\Large\bf   Sobolev extensions via reflections }
\author{ Pekka Koskela and Zheng Zhu \thanks{The research of both authors has been supported by the Academy of 
Finland Grant number 323960.
Zheng Zhu was also support by the CSC grant CSC201506020103 from China.}}
\date{}
\maketitle
\begin{abstract}
We show that the extension results by Maz'ya and Poborchi for polynomial  cusps can be realized via composition operators generated by reflections. We also study the case of the complementary domains.
\end{abstract}

\section{Introduction}
A domain $\boz\subset\rr^n$ is called a $(p, q)$-extension domain, $1\leq q\leq p\leq\fz$, if every $u\in W^{1,p}(\boz)$ has an extension $Eu\in W^{1,q}_{\rm loc}(\rn)$ with $||Eu||_{W^{1,q}(\rn \setminus \overline \Omega)}\le C ||u||_{W^{1,p}(\Omega)}.$
A Lipschitz domain $\boz$ is a $(p, p)$-extension domain for all $1\le p\le\infty$ by results due to Calder\'on and Stein \cite{stein}. Jones generalized this result to a much larger class of domains, so-called $(\epsilon, \delta)$-domains, but general domains are not necessarily extension domains for any $p, q$. For example, in \cite{Mazya1, Mazya2, Mazya3}, Maz'ya and Poborchi investigated in detail a typical case where the above extension property fails: the case of a domain with an outward peak, also see \cite{Mazya, poborchi} for related results. Once a polynomial degree of the peak was fixed, they found the optimal $p, q$ for the $(p, q)$-extendability.

The idea of using reflections to construct extension operators is implicit in the results for Lipschitz domains. Gol'dshtein, Latfullin and Vodop'yanov initiated the systematic use of reflections for constructing extension operators in the Euclidean plane $\rr^2$ in \cite{GLV,GV1}. In \cite{GS}, Gol'dshtein and Sitnikov showed that the Sobolev extendability for planar outward and inward cuspidal domains of polynomial order can be achieved by a bounded linear extension operator induced by reflections. Very recently, Koskela, Pankka and Zhang \cite{KPZ} proved that for every planar Jordan $(p, p)$-extension domain with $1<p<\fz$, there exists a reflection over the boundary $\partial\boz$ which induces a bounded linear extension operator from $W^{1,p}(\boz)$ to $W^{1,p}(\rr^2)$.

In this paper, we study the Sobolev extension via reflections on outward cuspidal domains in the Euclidean space $\rn$ with $n\geq 3$. From now on, we alway assume $n\geq3$.

We distinguish a horizontal coordinate axis in $\rn$,
\begin{equation}
\rn=\rr\times\rr^{n-1}=\{z:=(t ,x): t\in\rr\ {\rm and}\ x=(x_1, \cdots, x_{n-1})\in\rr^{n-1}\}.\nonumber
\end{equation}
Let us consider the model case of $\boz^s$, the outward cuspidal domain with the degree $1<s<\fz$, defined by setting 
\begin{equation}\label{outer cuspidal}
\boz^s:=\lf\{(t, x)\in\rr\times\rr^{n-1}=\rr^n:0<t\leq1, |x|<t^s\r\}\cup B((2,0),\sqrt2).
\end{equation}
See Figure \ref{fig:3}. For the case of this model domain, the results due to Maz'ya and Poborchi state that there exists a bounded linear extension operator $E_1$ from $W^{1,p}(\boz^s)$ to $W^{1,q}(\rr^n)$, whenever $\frac{1+(n-1)s}{n}<p<\fz$ and $1\leq q<\frac{np}{1+(n-1)s}$, and there exists another bounded linear extension operator $E_2$ from $W^{1,p}(\boz^s)$ to $W^{1,q}(\rr^n)$, whenever $\frac{1+(n-1)s}{2+(n-2)s}<p<\fz$ and $1\leq q<\frac{p+(n-1)sp}{1+(n-1)s+(s-1)p}$. For $p=\frac{1+(n-1)s}{n}$, one has
\[ \frac{np}{1+(n-1)s}=\frac{p+(n-1)sp}{1+(n-1)s+(s-1)p}=n-1.\] 
Hence both $E_1$ and $E_2$ extend functions in $W^{1, \frac{1+(n-1)s}{n}}(\boz^s)$ to $W^{1, q}(\rn)$, whenever $1\leq q<n-1$. However, surprisingly, Maz'ya and Poborchi also constructed a bounded linear extension operator $E_3$ from $W^{1, \frac{(n-1)+(n-1)^2s}{n}}(\boz^s)$ to $W^{1, n-1}(\rr^n)$. All these results are sharp, see \cite{Mazya} and references therein.
 For a detailed exposition of these results, see \cite{Mazya}. 
Interestingly, the given 
extension operators for the domain $\boz^s$ above are 
linear and the formulas defining the operators do not depend on $p$ 
once $s$ and the range of $p$ are fixed. Our main result explains this phenomenon.

\begin{thm}\label{main resu}
Let $\boz^s\subset\rr^n$ be an outward cuspidal domain with the degree $s>1$. Then\\
 $(1):$ There exists a reflection $\mathcal{R}_1:\widehat{\rr^n}\to\widehat{\rr^n}$ over $\partial\boz^s$ which induces a bounded linear extension operator from $W^{1,p}(\boz^s)$ to $W^{1,q}(\rr^n)$, whenever $\frac{1+(n-1)s}{n}<p<\fz$ and $1\leq q<\frac{np}{1+(n-1)s}$.\\ 
 $(2):$ There exists another reflection $\mathcal \mr_2:\widehat{\rr^n}\to\widehat{\rr^n}$ over $\partial\boz^s$ which induces a bounded linear extension operator from $W^{1,p}(\boz^s)$ to $W^{1,q}(\rr^n)$, whenever $\frac{1+(n-1)s}{2+(n-2)s}<p<\fz$ and $1\leq q<\frac{(1+(n-1)s)p}{1+(n-1)s+(s-1)p}$.
\end{thm}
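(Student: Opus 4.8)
The plan is to realize each of the two operators as a reflection-induced composition operator: set $E_iu:=u$ on $\boz^s$ and $E_iu:=u\circ\mathcal{R}_i$ on $\rn\setminus\overline{\boz^s}$, where $\mathcal{R}_i$ is an involutive homeomorphism of $\widehat{\rn}$ that fixes $\partial\boz^s$ pointwise and interchanges $\boz^s$ with its complement. Away from the cusp tip the boundary of $\boz^s$ is smooth (the ball part together with the region $\{t\ge t_0\}$ is Lipschitz), so there one may take $\mathcal{R}_i$ to be a fixed bi-Lipschitz reflection and invoke the Calder\'on--Stein/Jones extension; the whole difficulty is concentrated at the tip $0$. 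Since $\mathcal{R}_i$ fixes $\partial\boz^s$ and is a homeomorphism, $E_iu$ is continuous across $\partial\boz^s$, hence absolutely continuous on almost every line and weakly differentiable, with $\nabla(E_iu)=(D\mathcal{R}_i)^{T}\,(\nabla u)\circ\mathcal{R}_i$ a.e.\ in the complement and no singular measure on $\partial\boz^s$. Thus it remains only to bound this gradient.

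First I would reduce the norm estimate to an integrability statement for the distortion weight. Writing $y$ for a point of the complement and substituting $z=\mathcal{R}_i(y)$, the chain rule and the change of variables formula give
\begin{equation}
\int_{\rn\setminus\overline{\boz^s}}|\nabla(u\circ\mathcal{R}_i)|^q\,dy\ls\int_{\boz^s}|\nabla u(z)|^q\,w_i(z)\,dz,\qquad w_i(z):=\|D\mathcal{R}_i(\mathcal{R}_i(z))\|^{q}\,|\det D\mathcal{R}_i(z)|,\nonumber
\end{equation}
and the same computation controls $\int|u\circ\mathcal{R}_i|^q$ by $\int_{\boz^s}|u|^q w_i$. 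Since $q<p$, H\"older's inequality with exponent $p/q$ bounds these by $\|u\|_{W^{1,p}(\boz^s)}^{q}\,(\int_{\boz^s}w_i^{p/(p-q)}\,dz)^{(p-q)/p}$, so the theorem reduces to the integrability of $w_i^{p/(p-q)}$ over $\boz^s$ in the asserted ranges. Because $\mathcal{R}_i$ is an involution, $D\mathcal{R}_i(\mathcal{R}_i(z))=[D\mathcal{R}_i(z)]^{-1}$, whence $w_i(z)=\sigma_{\min}(D\mathcal{R}_i(z))^{-q}\prod_j\sigma_j(D\mathcal{R}_i(z))$ is governed by the singular values of the reflection at the interior point $z$.

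The two parts correspond to qualitatively different reflections of the tip, distinguished by whether the factor $\|D\mathcal{R}_i(\mathcal{R}_i(z))\|^{q}$ carries $q$-dependence. For $\mathcal{R}_1$ I would use a height-preserving reflection that, in each slice $\{t=\mathrm{const}\}$, rescales the cross-section radially so as to interchange $\{|x|<t^{s}\}$ with the surrounding exterior collar; this expands each of the $n-1$ transversal directions by a factor $\ls t^{-(s-1)}$ while leaving the $t$-direction essentially isometric, so that $\sigma_{\min}\gs1$, $\|D\mathcal{R}_1(\mathcal{R}_1(z))\|\ls1$, and $|\det D\mathcal{R}_1(z)|\ls t^{-(n-1)(s-1)}$. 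Then $w_1\ls t^{-(n-1)(s-1)}$ is independent of $q$, and $\int_0^1 t^{-(n-1)(s-1)p/(p-q)}\,t^{(n-1)s}\,dt$ converges exactly when $q<np/(1+(n-1)s)$, giving part $(1)$. For $\mathcal{R}_2$ I would instead use a reflection that is essentially volume preserving at the tip but trades the transversal direction against another, so that $|\det D\mathcal{R}_2(z)|\ls1$ while $\sigma_{\min}(D\mathcal{R}_2(z))\gs t^{s-1}$ and hence $\|D\mathcal{R}_2(\mathcal{R}_2(z))\|\ls t^{-(s-1)}$; now $w_2\ls t^{-(s-1)q}$ depends on $q$, and $\int_0^1 t^{-(s-1)q\,p/(p-q)}\,t^{(n-1)s}\,dt<\fz$ yields exactly $q<(1+(n-1)s)p/(1+(n-1)s+(s-1)p)$, giving part $(2)$. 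In both cases the stated lower bound on $p$ is precisely the threshold at which the admissible $q$ reaches $1$.

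The main obstacle is the construction of the reflections themselves. Unlike the planar case of Gol'dshtein and Sitnikov, for $n\ge3$ the exterior of the cusp near the tip is $n$-dimensional and fat, whereas the cusp is a thin tube; any involution interchanging them must be highly anisotropic, and one has to realize the singular-value profiles above uniformly up to the tip while keeping the map a genuine homeomorphism of $\widehat{\rn}$ that fixes $\partial\boz^s$ and patches with the bi-Lipschitz reflection on the Lipschitz part of the boundary. The delicate point is the behaviour along the central axis $\{x=0\}$, where a careless radial reflection (for instance a slice-wise inversion $(t,x)\mapsto(t,t^{2s}x/|x|^2)$) develops a non-integrable singularity; the reflections must be arranged so that the exterior neighbourhood they cover stays bounded in the cusp-adapted radial variable $|x|/t^{s}$, which is exactly what renders $w_i^{p/(p-q)}$ integrable. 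Once the maps are in hand with these bounds, the remaining estimates are the routine power-integral computations indicated above, and the sharpness of the resulting ranges matches that of the Maz'ya--Poborchi operators $E_1$ and $E_2$.
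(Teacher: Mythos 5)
Your analytic skeleton is sound and coincides with the paper's: extend by $u$ inside and $u\circ\mathcal{R}_i$ outside, reduce the gradient bound via the chain rule, a change of variables and H\"older's inequality (using $q<p$) to the integrability of a distortion weight. Indeed your condition $\int_{\Omega^s}w_i^{p/(p-q)}\,dz<\infty$ is exactly the condition $\int|D\mathcal{R}_i|^{pq/(p-q)}|J_{\mathcal{R}_i}|^{-q/(p-q)}\,dz<\infty$ of the paper's Lemma~\ref{lem:pQc} (Ukhlov's composition lemma), transported from the exterior to the cusp side by the same change of variables, using your involution identity $D\mathcal{R}_i(\mathcal{R}_i(z))=[D\mathcal{R}_i(z)]^{-1}$; and your power counts $\int_0^1 t^{(n-1)s-(n-1)(s-1)p/(p-q)}\,dt$ and $\int_0^1 t^{(n-1)s-(s-1)pq/(p-q)}\,dt$ reproduce the paper's integrals and yield precisely the stated ranges, including the thresholds for $p$. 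Two points you gloss over but which the paper treats carefully: the estimates are first proved for $u\in C_o^\infty(\mathbb{R}^n)$, for which $u\circ\mathcal{R}_i$ is locally Lipschitz off the tip, and then extended to all of $W^{1,p}(\Omega^s)$ by density (the segment condition, Lemma~\ref{lem:density}); and membership of the glued function in $W^{1,q}$ across $\partial\Omega^s$ and the tip is justified by working on $B(\Omega^s,1)\setminus\{0\}$ and using $|\partial\Omega^s|=0$, rather than by the bare continuity-implies-ACL assertion you make.

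The genuine gap is the one you yourself flag and then leave unresolved: the reflections are never constructed, and their existence with the stated singular-value profiles is the actual content of the theorem. Worse, the one concrete structure you propose for $\mathcal{R}_1$ --- a height-preserving map which in each slice $\{t=\mathrm{const}\}$ interchanges the disk $\{|x|<t^s\}$ with the surrounding collar --- is topologically impossible: a slice map fixing the sphere $\{|x|=t^s\}$ would have to send an $(n-1)$-dimensional ball homeomorphically onto an annulus $S^{n-2}\times(0,1)$, which cannot happen; this is exactly your ``central axis'' difficulty in disguise. The paper's resolution is a piecewise construction in which slice-preservation is abandoned near the axis: the exterior neighbourhood $\Delta\setminus\overline{\Omega^s}$ is split into a backward cone $A$ behind the tip, mapped into the core of the cusp by $(t,x)\mapsto\bigl(-t,\tfrac16|t|^{s-1}x\bigr)$, a middle cone $B$ on which the new height is taken to be $|x|$, and only on the outer collar $C$ (where the annulus-to-annulus topology is correct) a slice-preserving radial rescale; the three pieces together fill a subcusp $\Omega^s_1$ and realize $|D\mathcal{R}_1|\lesssim1$, $|J_{\mathcal{R}_1}|\sim t^{(n-1)(s-1)}$. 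Similarly $\mathcal{R}_2$ is built from two pieces $D$ and $E$, with $(t,x)\mapsto(|x|^{1/s},\cdot)$ on $E$, realizing your profile $|J_{\mathcal{R}_2}|\sim1$, $\|D\mathcal{R}_2\|\lesssim|x|^{-(s-1)/s}$. So your targets and numerology are correct and sharp, but without such an explicit (necessarily non-slice-preserving) construction the existence claim --- the heart of both parts of the theorem --- remains unproved.
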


Theorem \ref{main resu} implies that both reflections $\mr_1$ and $\mr_2$ induce a bounded linear extension operator from $W^{1, \frac{(n-1)+(n-1)^2s}{n}}(\boz^s)$ to $W^{1,q}(\rr^n)$, whenever $1\leq q<n-1$. We would like to know if there exists a further reflection $\mr_3$ which induces a bounded linear extension operator from $W^{ 1, \frac{(n-1)+(n-1)^2s}{n}}(\boz^s)$ to $W^{1, n-1}(\rn)$.

 In general, we say that a reflection $\mr:\widehat{\rn}\rightarrow\widehat{\rn}$ over $\partial\boz$, for a bounded domain $\Omega$ (whose 
boundary has volume zero) induces
a bounded linear extension operator from $W^{1,p}(\boz)$ to $W^{1,q}(\rr^n)$ if there is an open set $U$ containing $\partial\boz$ so that, for every $u\in W^{1,p}(\Omega),$ 
the function $v$ defined by setting
$v=u$ on $\Omega\cap U$ and $v=u\circ \mr$ on $U\setminus\overline{\boz}$ has a representative that belongs to $W^{1,q}(U)$ with 
\begin{equation} \label{uu1}
\|v\|_{W^{1,q}(U)}\leq C\|u\|_{W^{1,p}(U\cap\boz)},
\end{equation}
for some positive constant $C$ independent of $u$. Similarly, we say that the reflection $\mr$ induces a bounded linear extension operator from $W^{1,p}(\rr^n\setminus\overline{\boz})$ to $W^{1,q}(\rr^n)$, if for every $u\in W^{1,p}(\rr^n\setminus\overline{\boz})$ the function $\tilde v$ defined by setting $\tilde v=u$ on $U\setminus\overline \boz$ and $\tilde v=u\circ \mr$ on $U\cap \boz$ has a representative that belongs to $W^{1,q}(U)$ with
\begin{equation}\label{uu2}
\|\tilde v\|_{W^{1,q}(U)}\leq C\|u\|_{W^{1,p}(U\setminus\overline{\boz})}.
\end{equation}
 Here the introduction of the open set $U$ is a 
convenient way to overcome the non-essential difficulty that functions in
$W^{1,p}(G)$ do not necessarily belong to $W^{1,q}(G)$ when $1\le q<p<\fz$ and $G$ has infinite volume. It follows from the assumption \eqref{uu1} (or \eqref{uu2}) via the use of a suitable cut-off function that $\Omega$ (or $\rr^n\setminus\overline{\boz}$, respectively) is a $(p, q)-$extension domain with a bounded linear extension operator. For this, see Section $2$.
 
The crucial point behind Theorem \ref{main resu} is that we obtain Sobolev estimates on 
$u\circ\mr$ in terms of the data on $u$. There
is a rather long history of such results, for example see \cite{Ukhlov2, Ukhlov3, Hencl, Ukhlov} and references therein. In the setting of our problem, the most relevant reference is the paper \cite{Ukhlov} by Ukhlov. What we find surprising in 
our situation is that a single $\mr_1$ induces the best bounded linear extension operator for all values $\frac{(n-1)+(n-1)^2s}{n}<p<\fz$ and another single $\mr_2$ induces the best bounded linear extension operator for all values $\frac{1+(n-1)s}{2+(n-2)s}<p<\frac{(n-1)+(n-1)^2s}{n}$, but neither $\mr_1$ nor $\mr_2$ can induce a best linear extension operator for $p=\frac{(n-1)+(n-1)^2s}{n}$. In the case of compositions from $W^{1,p}$ to $W^{1,p},$ the 
relevant estimate is
\begin{equation}\label{maar}
|D\mr(z)|^p\le C|J_\mr(z)|
\end{equation}
almost everywhere, which for $p=n$ is the pointwise condition of quasiconformality. Mappings satisfying (\ref{maar}) with $p\neq n$ apparently appeared for the first time in the works of Gehring \cite{Gehring1} and of Maz'ya \cite{Mazya4}, independently.
With some work one can show that \eqref{maar} implies the corresponding 
inequality with $p$ replaced by $q$ when either $q>p>n$ or $1\le q<p<n,$ but
not in other cases. On the other 
hand, for $n-1<p<\fz$, a result in \cite{GS} shows that \eqref{maar} together with $W^{1,p}$-regularity of $\mr$
implies the dual estimate
\begin{equation}\label{maar1}
|D\mr^{-1}(z)|^{\frac{p}{p+1-n}}\le C'|J_{\mr^{-1}}(z)|.
\end{equation}
This kind of duality actually also holds for compositions from $W^{1,p}$ to $W^{1,q}$ with $q<p$, see \cite{Ukhlov}. Also see \cite{HKARMA,Ukhlov,Vodop3} for general results on the regularity of $\mr^{-1}$.

\begin{figure}[htbp]
\centering
\includegraphics[width=0.6\textwidth]
{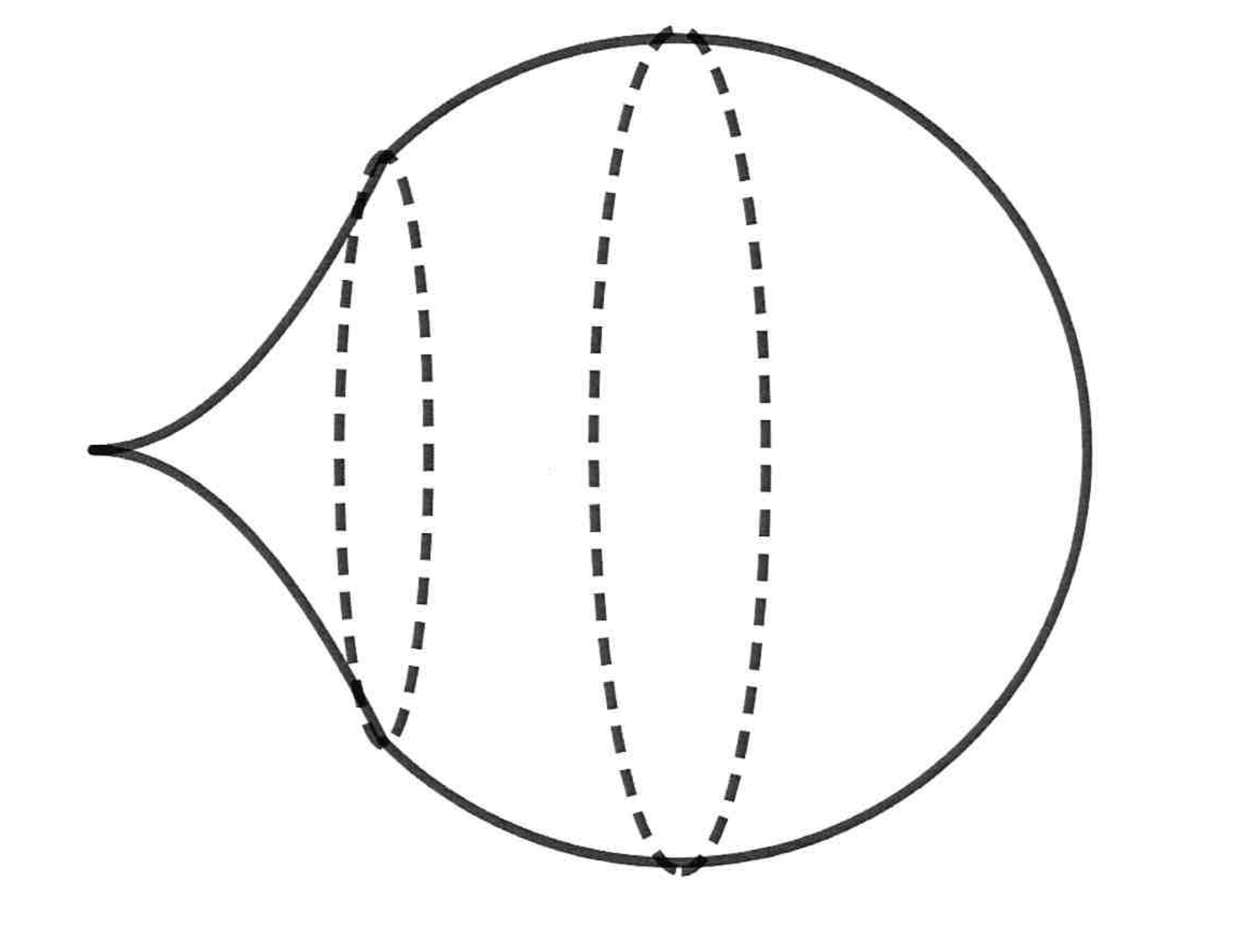}\caption{$\boz^s$}\label{fig:3}
\end{figure}

From the argument above, one could also expect that the reflections $\mr_1$ and $\mr_2$ induce a bounded linear extension operator from $W^{1,p}(\rr^n\setminus\overline{\boz^s})$ to $W^{1,q}(\rr^n)$, for some $1\leq q\leq p<\fz$. As one can easily check, for every $1<s<\fz$, $\rr^n\setminus\overline{\boz^s}$ is a so-called $(\epsilon, \delta)$-domain and hence a $(p, p)$-extension domain for every $1\leq p<\fz$ due to Jones \cite{Jones}. Our next theorem relates this to our reflections.
\begin{thm}\label{thm:comp}
For every $1<s<\fz$, $\rr^n\setminus\overline{\boz^s}$ is a $(p, p)$-extension domain, for every $1\leq p<\fz$. The reflection $\mr_1$ over $\partial\boz^s$ in Theorem \ref{main resu} induces a bounded linear extension operator from $W^{1,p}(\rr^n\setminus\overline{\boz^s})$ to $W^{1,p}(\rr^n)$, whenever $1\leq p\leq n-1$. Moreover, for each $n-1<p<\fz$, no reflection over $\partial\boz^s$ can induce a bounded linear extension operator from $W^{1,p}(\rr^n\setminus\overline{\boz^s})$ to $W^{1,p}(\rr^n)$.
\end{thm}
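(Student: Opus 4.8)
The plan is to treat the three assertions of Theorem \ref{thm:comp} separately, reserving the real work for the non-existence statement at the end.

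For the first assertion I would simply verify that $G:=\rr^n\setminus\overline{\boz^s}$ is an $(\ez,\dz)$-domain in the sense of Jones and then quote \cite{Jones}. Away from the tip the boundary $\pa\boz^s$ consists of the smooth cusp surface $\{|x|=t^s,\ 0<t\le 1\}$, the sphere $\pa B((2,0),\sqrt2)$, and their junction at $t=1$ (where the two pieces meet with matching cross-sectional radius $1$), so there the cigar condition is immediate. The only point requiring attention is the cusp tip at the origin: since the removed spike has cross-sectional radius $t^s$ with $s>1$, it is extremely thin, so any two points $z,w\in G$ with $|z-w|<\dz$ can be joined inside $G$ by a curve of length $\ls|z-w|$ that routes around the spike while keeping distance $\gs\ez\,|z-\zeta|\,|w-\zeta|/|z-w|$ from $\pa\boz^s$ at each point $\zeta$ of the curve. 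This is the routine part and I expect no difficulty.

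For the second assertion, by the definition \eqref{uu2} and the cut-off reduction of Section $2$ it suffices to bound the gradient of $u\circ\mr_1$ on the cusp part of $U$ by the gradient of $u$ on $G$. Writing $h:=\mr_1|_{\boz^s}\colon\boz^s\to G$ for the restriction of the reflection to the cusp, the task reduces, via the composition-operator characterization, to the pointwise distortion inequality $|Dh(z)|^p\le C\,|J_h(z)|$ a.e.\ together with $W^{1,p}_{\loc}$-regularity of $h$. I would establish this by a direct computation from the explicit formula for $\mr_1$. Near the tip $h$ essentially preserves the height $t$ and expands the transverse ball $\{|x|<t^s\}$ isotropically by a large factor $\Lambda(z)\ge 1$; hence $|Dh|\approx\Lambda$ while $|J_h|\approx\Lambda^{\,n-1}$ up to bounded factors, so that $|Dh|^p/|J_h|\approx\Lambda^{\,p-(n-1)}$. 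For $1\le p\le n-1$ the exponent is non-positive and the quotient stays bounded, which is exactly the claimed range; away from the tip $\mr_1$ is bi-Lipschitz and the quotient is trivially bounded for every $p$. This pins the threshold at $n-1$, and together with the $L^p$-bound handled by the cut-off it yields the bounded extension. The delicate points here are the verification of the isotropy and the exact transverse expansion rate of $\mr_1$, and of its Sobolev regularity up to the tip, from the explicit construction.

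The third assertion is the crux, and I expect it to be the main obstacle. The strategy is a duality argument that converts a hypothetical complement extension into a forbidden cusp extension. Suppose some reflection $\mr$ over $\pa\boz^s$ induced a bounded linear extension from $W^{1,p}(G)$ to $W^{1,p}(\rr^n)$ for some $p>n-1$. Reading \eqref{uu2} on the cusp part, the restriction $h:=\mr|_{\boz^s}\colon\boz^s\to G$ would induce a bounded composition operator on the homogeneous space $L^{1,p}$, which forces $h\in W^{1,p}_{\loc}$ and the pointwise inequality \eqref{maar} for $h$ with exponent $p$. Since $p>n-1$, the duality of \cite{GS} recorded in \eqref{maar1} then applies and gives $|D\mr^{-1}|^{\,p/(p+1-n)}\le C'|J_{\mr^{-1}}|$ a.e., that is, the map $g:=\mr|_{G}=h^{-1}\colon G\to\boz^s$ satisfies \eqref{maar} with exponent $p':=p/(p+1-n)$, where $1<p'<\fz$ throughout $p>n-1$. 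By the converse composition-operator statement this makes $g$ induce a bounded composition operator $L^{1,p'}(\boz^s)\to L^{1,p'}(G)$; since $g=\mr|_G$, this is precisely a reflection-induced bounded extension from $W^{1,p'}(\boz^s)$ to $W^{1,p'}(\rr^n)$, so $\boz^s$ would be a $(p',p')$-extension domain. This is impossible: the outward cusp $\boz^s$ violates the measure density condition at the tip, where $|\boz^s\cap B(0,r)|\sim r^{\,1+(n-1)s}=o(r^n)$ because $s>1$, and measure density is necessary for $W^{1,p'}$-extendability (see also the sharpness in \cite{Mazya}). The contradiction proves that no reflection can work when $p>n-1$. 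The points I anticipate needing the most care are the two passages through the composition-operator characterization — in particular securing the $W^{1,p}_{\loc}$-regularity of $h$ that is required before one may invoke \eqref{maar1} — and confirming that the duality exponent $p'$ is admissible across the whole range $p>n-1$.
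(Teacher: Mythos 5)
Your proposal follows the paper's own route in all three parts: Jones's theorem after checking the $(\ez,\dz)$-property of the complement; the explicit computation $|D\mr_1|\ls t^{-(s-1)}$, $|J_{\mr_1}|\sim t^{-(s-1)(n-1)}$ for $\mr_1$ restricted to the cusp, combined with the composition-operator characterization (Lemma \ref{QCcompo}), which pins the threshold at $p\le n-1$; and, for the impossibility statement, extracting the pointwise inequality \eqref{maar} for $\mr|_{\boz^s}$ from the hypothesis, applying the inverse-regularity duality (Lemma \ref{reduinverse}, the result of \cite{GS} and \cite{Vodop3}) to get the exponent $p'=p/(p+1-n)$ for the map going the other way, and concluding that $\boz^s$ would be a $(p',p')$-extension domain. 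The only place you genuinely diverge is the final contradiction: you rule out $(p',p')$-extendability of $\boz^s$ via the failure of the measure density condition at the tip, $|\boz^s\cap B(0,r)|\sim r^{1+(n-1)s}=o(r^n)$, using its necessity for Sobolev extendability \cite{HKT1}, whereas the paper simply cites the Maz'ya--Poborchi result that $\boz^s$ is not a $(q,q)$-extension domain for any $q$ \cite{Mazya}. Both endings are valid; yours has the advantage of being verifiable by hand.

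There is, however, one localized but genuine gap. You claim that ``reading \eqref{uu2} on the cusp part'' already yields a bounded composition operator on the homogeneous space $L^{1,p}$, i.e.\ a bound of $\int_{G\cap\boz^s}|D(u\circ\mr)|^p\,dz$ by $\int |Du|^p\,dz$ alone. That is not a reading of \eqref{uu2}: the hypothesis controls the extension by the \emph{full} $W^{1,p}$-norm of $u$, while Lemma \ref{QCcompo} requires the gradient-only bound on the right-hand side. Closing this is precisely the content of the paper's Lemma \ref{lem:reftwo}, whose proof uses an idea from \cite{DP}: pick a Lipschitz domain $G$ with $\overline{\boz^s}\subset G$ and $\partial G\subset U$, observe that $G\setminus\overline{\boz^s}$ is an $(\ez,\dz)$-domain, apply the Jones extension to $u-u_{G\setminus\overline{\boz^s}}$, and absorb the $L^p$-term via the Sobolev--Poincar\'e inequality on $G\setminus\overline{\boz^s}$ \cite{BKmrt,SStams}. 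A matching issue arises in the converse direction: to upgrade the homogeneous composition bound for $g$ into a genuine $(p',p')$-extension bound one needs the Poincar\'e inequality of \cite[Lemma 4.1]{DP} on a Lipschitz neighborhood $\widetilde G$ to recover $L^{p'}$-control of the extension, plus a density argument. You flag both passages as needing care, but the mean-subtraction/Poincar\'e mechanism is the missing idea, and without it the chain of implications does not close. (A cosmetic point you share with the paper: writing $\mr|_{G}=h^{-1}$ tacitly assumes the reflection is an involution, which Definition \ref{defn:ref} does not require; one should simply run the argument with $h^{-1}$ itself, which suffices for the contradiction.)
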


What then about the case $p=\fz$? We say that a domain $\boz\subset\rr^n$ is uniformly locally quasiconvex if there exist constants $C>0$ and $R>0$ such that for every pair of points $x, y\in\boz$ with $d(x, y)<R$, there is a rectifiable curve $\gamma$ connecting $x$ and $y$ in $\boz$ such that the length of $\gamma$ is bounded from above by $Cd(x, y)$. If the above holds without the distance restriction, $\boz$ is said to be quasiconvex. Recall that $\boz$ is an $(\fz, \fz)$-extension domain if and only if it is uniformly locally quasiconvex, see \cite{HKT} by Haj\l{}asz, Koskela and Tuominen. One can easily check that both $\boz^s$ and $\rr^n\setminus\overline{\boz^s}$ are uniformly locally quasiconvex, equivalently, they are $(\fz, \fz)$-extension domains. We close this introduction with the following analog of Theorem \ref{thm:comp}.
\begin{thm}\label{thm:infty}
Given $1<s<\fz$, both $\boz^s$ and $\rr^n\setminus\overline{\boz^s}$ are $(\fz, \fz)$-extension domains. The reflection $\mr_1$ over $\partial\boz^s$ in Theorem \ref{main resu} induces a bounded linear extension operator from $W^{1,\fz}(\boz^s)$ to $W^{1,\fz}(\rr^n)$. On the other hand, no reflection over $\partial\boz^s$ can induce a bounded linear extension operator from $W^{1,\fz}(\rr^n\setminus\overline{\boz^s})$ to $W^{1,\fz}(\rr^n)$.
\end{thm}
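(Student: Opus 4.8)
The plan is to treat the three assertions in turn, spending most of the effort on the non-existence statement, which is the crux. For the extension-domain assertion I would only need the characterization from \cite{HKT} that the $(\fz,\fz)$-extension domains are exactly the uniformly locally quasiconvex ones, together with the observation (already recorded in the excerpt) that both $\boz^s$ and $\rr^n\setminus\overline{\boz^s}$ have this property. For $\boz^s$ one joins two nearby points by moving along the $t$-axis and then inside the convex cross-sectional balls $\{t\}\times\{|x|<t^s\}$, a curve of length comparable to the Euclidean distance. For $\rr^n\setminus\overline{\boz^s}$ the $(\epsilon,\delta)$-structure recorded before Theorem \ref{thm:comp} supplies cigar curves and hence uniform local quasiconvexity. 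Feeding these into \cite{HKT} gives the first assertion.

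For the positive reflection assertion I would verify that the map $u\mapsto v$, with $v=u$ on $\boz^s\cap U$ and $v=u\circ\mr_1$ on $U\setminus\overline{\boz^s}$, sends Lipschitz functions to Lipschitz functions. The essential input is that $\mr_1$ restricted to the exterior is Lipschitz: it compresses the fat exterior into the thin cusp, so that $|D\mr_1|\le C$ by the explicit formula underlying Theorem \ref{main resu}. Since $\boz^s$ is quasiconvex, any $u\in W^{1,\fz}(\boz^s)$ is Lipschitz for the intrinsic metric, and precomposition with the Lipschitz map $\mr_1$ keeps $u\circ\mr_1$ Lipschitz; as $\mr_1=\mathrm{id}$ on $\partial\boz^s$ the two pieces of $v$ agree continuously across the boundary, and local Lipschitz bounds on a small (convex) ball $U$ around each boundary point give the required $W^{1,\fz}(U)$ estimate.

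The heart of the matter is the non-existence assertion. Suppose some reflection $\mr$ over $\partial\boz^s$ induced a bounded extension from $W^{1,\fz}(\rr^n\setminus\overline{\boz^s})$ to $W^{1,\fz}(\rr^n)$; then on $U\cap\boz^s$ the extension is $u\circ\mr$ and must be Lipschitz for every Lipschitz $u$ on the exterior. Testing against the truncated distance functions $u=\min\{d_{\mathrm{ext}}(\cdot,\mr z_1),1\}$, where $d_{\mathrm{ext}}$ is the intrinsic metric of the exterior (comparable to the Euclidean one at small scales by local quasiconvexity), converts the Lipschitz bound into the metric estimate $d_{\mathrm{ext}}(\mr z_1,\mr z_2)\le C|z_1-z_2|$ for interior points $z_1,z_2$ near the tip. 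Writing $S_\tau=\{(\tau,x):|x|=\tau^s\}\subset\partial\boz^s$ and using that $\mr$ fixes $\partial\boz^s$ pointwise, I would first take $z_1=(\tau,0)$ and $z_2=p\in S_\tau$ to get $d_{\mathrm{ext}}(\mr(\tau,0),p)\le C\tau^s$, and then a general interior $z$ of height $\tau$ to conclude that $\mr(z)$ lies within Euclidean distance $\le C'\tau^s$ of the sphere $S_\tau$.

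To close the argument I would exploit that the exterior is comparatively fat near the tip. Fix the exterior points $q_\tau=(\tau,\tfrac12\tau e_1)$, which lie at Euclidean distance $\approx\tfrac12\tau$ from $\partial\boz^s$. Since $\mr$ is a homeomorphism of $\widehat{\rr^n}$ fixing the tip and interchanging the two sides, $z_\tau:=\mr(q_\tau)$ is an interior point whose height $h_\tau$ tends to $0$ as $\tau\to0$. Applying the previous estimate to $z_\tau$ shows that $q_\tau=\mr(z_\tau)$ lies within Euclidean distance $\le C'h_\tau^s$ of $S_{h_\tau}$; comparing the $t$-coordinates forces $|\tau-h_\tau|\le C'h_\tau^s$, whence $h_\tau\approx\tau$ because $s>1$, and then comparing the $x$-coordinates forces $\tfrac12\tau\le(1+C')h_\tau^s\approx\tau^s$, i.e.\ $\tau^{\,1-s}\ls 1$. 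Letting $\tau\to0$ gives a contradiction since $s>1$, proving that no reflection can work. The main obstacle is precisely this last step: one must defeat every reflection simultaneously, and the mechanism is that any reflection for which $u\circ\mr$ is Lipschitz would have to press the thin cusp so tightly against its own boundary that its image could not reach the comparatively fat part of the exterior near the tip.
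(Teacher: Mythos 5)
Your handling of the first two assertions coincides with the paper's: uniform local quasiconvexity plus the characterization in \cite{HKT} for the extension-domain statement, and the pointwise bounds $|D\mr_1|\ls 1$ on the exterior pieces together with local bi-Lipschitzness away from the tip for the positive statement. For the non-existence claim, however, your route is genuinely different. The paper tests against a \emph{single} function, $u(t,x)=\min\{\max\{t,0\},1\}$, and uses a soft connectedness/topological step: since $\mr$ is continuous and fixes $\partial\boz^s$ (in particular the tip), each cross-section $D_t=\{(t,x):|x|<t^s\}$ contains a point that $\mr$ sends into $\{t\le 0\}$, where the extension vanishes, while the extension equals $t$ on the exterior sphere $\{(t,x):|x|=2t^s\}$ at distance $\ls t^s$; then $t/t^s=t^{1-s}\to\fz$ contradicts the Lipschitz property forced by $W^{1,\fz}$. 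You instead test against a \emph{family} of truncated distance functions to extract the metric estimate $|\mr z_1-\mr z_2|\le C|z_1-z_2|$ for interior points near the tip, combine it with $\mr|_{\partial\boz^s}=\mathrm{id}$ to force images of interior points of height $h$ to lie within $Ch^s$ of the boundary sphere $S_h$, and then chase preimages of the fat exterior points $q_\tau=(\tau,\tfrac12\tau e_1)$. Your version yields an explicit quantitative constraint on any admissible reflection, at the cost of a family of test functions and a limiting argument; the paper's version needs only one test function but replaces your coordinate comparison by the topological location of a zero of the extension inside $D_t$. Both hinge on the same mechanism, the thinness $t^s\ll t$ of the cusp.

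One step fails as written and must be repaired. You set $z_\tau:=\mr(q_\tau)$ and later use $q_\tau=\mr(z_\tau)$, which asserts $\mr\circ\mr=\mathrm{id}$; Definition \ref{defn:ref} does not require a reflection to be an involution, and the theorem must exclude \emph{all} reflections, not just involutive ones. The repair is immediate: take $z_\tau:=\mr^{-1}(q_\tau)$ instead. Since $\mr^{-1}$ is again a homeomorphism fixing $\partial\boz^s$ pointwise and mapping $\widehat{\rn}\setminus\overline{\boz^s}$ onto $\boz^s$, the point $z_\tau$ is interior, its height $h_\tau$ tends to $0$ by continuity of $\mr^{-1}$ at the (fixed) tip, and now $\mr(z_\tau)=q_\tau$ holds by definition, so your comparison $|\tau-h_\tau|\le C h_\tau^s$, $\tfrac12\tau\ls h_\tau^s$ goes through unchanged. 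A smaller point: your metric estimate is derived for pairs of interior points but applied with one point $p\in S_\tau\subset\partial\boz^s$; approximate $p$ by interior points $z^{(k)}\to p$, use $\mr z^{(k)}\to \mr p=p$, and pass to the limit in the Euclidean form $|\mr z_1-\mr z^{(k)}|\le C|z_1-z^{(k)}|$ — the intrinsic distance $d_{\mathrm{ext}}$ need not extend continuously to the boundary, but only Euclidean distances are used afterwards, so this suffices.
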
 

\section{Preliminaries}
In this paper, $\widehat{\rr^n}:=\rr^n\cup\{\fz\}$ is the one-point compactification of $\rr^n$. Next, $z=(t, x)\in\rr\times\rr^{n-1}=\rr^n$ means a point in the $n$-dimensional Euclidean space $\rn$. We write $C=C(a_{1},a_{2},...,a_{n})$ to indicate a constant $C$ that depends only on the parameters $a_{1},a_{2},...,a_{n}$; the notation $A\ls B$ means there exists a finite constant $c$ with $A\le cB$ , and $A\sim_{c} B$ means $\frac{1}{c}A\leq B\leq cA$ for a constant $c>1$. Typically $c, C,...$ will be constants that depend on various parameters and may differ even on the same line of inequalities. The Euclidean distance between given points $z_1, z_2$ in Euclidean space $\rr^n$ is denoted by $d(z_1, z_2)$ or $|z_1-z_2|$. Then the distance between two sets $A, B\subset\rn$ is denoted by 
\[d(A, B):=\inf\{d(z_1, z_2): z_1\in A, z_2\in B\}.\] The open ball of radius $r$ centered at the point $z$ is denoted by $B(z, r)$. In what follows, $\Omega\subset\rr^n$ is always a domain, and $\partial\boz$ is the boundary of $\boz$.  The $r$-neighborhood of $\Omega$ is 
\[B(\Omega,r):=\{z\in\rr^n: d(z, \Omega)<r\}.\] 
Given a Lebesgue measurable set $A\subset\rr^n$, $|A|$ refers to the $n$-dimensional Lebesgue measure. The interior of a set $A\subset\rn$ is denoted by $\mathring A$. For a locally integrable function $u$ and a measurable set $A\subset\rr^n$ with $0<|A|<\fz$, we define the integral average of $u$ over $A$ by setting
\begin{equation}
\bint_Au(z)dz:=\frac{1}{|A|}\int_Au(z)dz.\nonumber
\end{equation}



The Sobolev space $W^{1,p}(\Omega)$ for $p\in[1,\infty]$ is the collection of all functions $u\in L^p(\Omega)$ whose norm
\begin{equation}
\|u\|_{W^{1,p}(\Omega)}:=\|u\|_{L^p(\Omega)}+\||D u|\|_{L^{p}(\Omega)}\nonumber
\end{equation}
is finite. Here $D u=(g_{1},g_{2},...,g_{n})$ is the $distributional\ gradient$ of $u$, where $g_{i}$ is the weak partial derivative of $u$ with respect to $x_{i}$. A mapping $f=(f_1, f_2, \cdots, f_m):\boz\to\boz'$ is said to be in the class $W^{1,p}(\boz, \boz')$, if every component $f_i$ is in the Sobolev space $W^{1, p}(\boz)$.


The outward cuspidal domain $\boz^s$ has a boundary singularity but it is still rather nice. For example, both the outward cuspidal domain $\boz^s$ and its complement $\rn\setminus\overline{\boz^s}$ satisfy the segment condition. 
\begin{defn}\label{defn:segment}
We say that a domain $\boz\subset\rn$ satisfies the segment condition if every $x\in\partial\boz$ has a neighborhood $U_x$ and a nonzero vector $y_x$ such that if $z\in\overline\boz\cap U_x$, then $z+ty_x\in\boz$ for $0<t<1$.
\end{defn}
For a domain satisfying the segment condition, we have the following lemma. See \cite[Theorem 3.22]{adams}.
\begin{lem}\label{lem:density}
If the domain $\boz\subset\rn$ satisfies the segment condition, then the set of restrictions to $\boz$ of functions in $C_o^\fz(\rn)$ is dense in $W^{1,p}(\boz)$ for $1\leq p<\fz$. In short, $C_o^\fz(\rn)\cap W^{1,p}(\boz)$ is dense in $W^{1,p}(\boz)$ for $1\leq p<\fz$.
\end{lem}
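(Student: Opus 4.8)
The plan is to prove the density by localizing with a partition of unity and then, at each boundary point, using the segment condition to replace a troublesome boundary function by a \emph{translate} that is genuinely defined on an open set reaching past $\partial\boz$; once the boundary singularity has been pushed away in this manner, ordinary mollification produces the required smooth compactly supported approximants.

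First I would reduce to compactly supported $u$ and then cover $\overline\boz$. Given $u\in W^{1,p}(\boz)$ with $1\leq p<\fz$, multiply by a cutoff $\eta_R$ equal to $1$ on $B(0,R)$, supported in $B(0,2R)$, with $|D\eta_R|\leq C/R$; since $u\in W^{1,p}(\boz)$, one checks $\eta_R u\to u$ in $W^{1,p}(\boz)$ as $R\to\fz$, so we may assume $u$ has bounded support. For each $x\in\partial\boz$ the segment condition supplies a neighborhood $U_x$ and a nonzero vector $y_x$; together with a family of open sets each compactly contained in $\boz$ these cover $\overline\boz$, and on the bounded support of $u$ finitely many suffice. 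Choosing a subordinate smooth partition of unity $\{\psi_j\}_j$ and writing $u=\sum_j\psi_j u$, I split the indices into interior ones (where $\mathrm{supp}\,\psi_j$ is a compact subset of $\boz$) and boundary ones (where $K_j:=\mathrm{supp}\,\psi_j$ is a compact subset of some $U_{x}$, with associated direction $y_j$). Each piece $v_j:=\psi_j u$ lies in $W^{1,p}(\boz)$, so it suffices to approximate each $v_j$ in $W^{1,p}(\boz)$ by the restriction of a function in $C_o^\fz(\rn)$; summing the finitely many approximants yields a single element of $C_o^\fz(\rn)$.

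An interior piece $v_j$ is approximated by the standard mollification $\phi_\epsilon * v_j$, which for small $\epsilon$ belongs to $C_o^\fz(\rn)$ and converges to $v_j$ in $W^{1,p}$. For a boundary piece $v=v_j$ with direction $y=y_j$, I use the translation trick: set $w_t(z):=v(z+ty)$ on the open set $\boz_t:=\{z:z+ty\in\boz\}$. For $z\in\overline\boz\cap K_j\subset\overline\boz\cap U_{x}$ the segment condition gives $z+ty\in\boz$, hence $\overline\boz\cap K_j\subset\boz_t$; as $\overline\boz\cap K_j$ is compact and $\boz_t$ is open, there is $\epsilon_0>0$ with the $\epsilon_0$-neighborhood of $\overline\boz\cap K_j$ contained in $\boz_t$. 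Thus $w_t\in W^{1,p}(\boz_t)$ is defined on a full neighborhood of the closed region carrying the support of $v$, and for $\epsilon<\epsilon_0$ the mollification $\phi_\epsilon * w_t$ is smooth and compactly supported, hence lies in $C_o^\fz(\rn)$. Letting first $\epsilon\to0$ (convergence of mollifications in $W^{1,p}$ on that neighborhood) and then $t\to0$ (continuity of translation in $L^p$ and $W^{1,p}$, so $w_t|_{\boz}\to v$) shows that the restriction to $\boz$ of $\phi_\epsilon * w_t$ approximates $v$ in $W^{1,p}(\boz)$ as closely as desired.

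The main obstacle is exactly this boundary step. Extending $v$ by zero across $\partial\boz$ generally destroys $W^{1,p}$-regularity, because of a possible jump, so one cannot mollify $v$ directly; the entire role of the segment condition is to repair this defect, since translating in the admissible direction $y$ pushes the boundary inward and makes $w_t$ defined, with no jump, on an open set that compactly contains the support region, which is precisely what legitimizes the mollification. The remaining care is quantitative bookkeeping: choosing the mollification radius $\epsilon$ smaller than the translation-dependent margin $\epsilon_0$, and ordering the two limits $t\to0$ and $\epsilon\to0$ so that the translation error and the mollification error are controlled simultaneously, after which the finitely many local approximants assemble into the desired global element of $C_o^\fz(\rn)\cap W^{1,p}(\boz)$.
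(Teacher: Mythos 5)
The paper itself does not prove this lemma; it simply cites \cite[Theorem 3.22]{adams}, and your strategy (reduction to bounded support, partition of unity, translation in the segment direction, mollification) is exactly the standard Adams--Fournier argument behind that citation. The skeleton is right, and your closing paragraph correctly identifies why the segment condition is needed. However, there is a genuine gap in the boundary step as you wrote it. You define $w_t(z):=v(z+ty)$ \emph{only on} $\boz_t:=\{z:z+ty\in\boz\}$, and then assert that $\phi_\epsilon\ast w_t$ ``is smooth and compactly supported, hence lies in $C_o^\fz(\rn)$.'' This does not parse: a convolution of a function defined only on $\boz_t$ is itself defined only at points $z$ with $B(z,\epsilon)\subset\boz_t$, i.e.\ on a neighborhood of $\overline\boz\cap K_j$, not on $\rn$, so it is not an element of $C_o^\fz(\rn)$. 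Moreover, the segment condition gives only $\overline\boz\cap U_x\subset\boz_t$; nothing guarantees $\boz\subset\boz_t$, so neither $\phi_\epsilon\ast w_t$ nor even the restriction $w_t|_\boz$ (used in your claim ``$w_t|_\boz\to v$'') is defined on all of $\boz$, while the final estimate must be carried out in $W^{1,p}(\boz)$, not merely near $K_j$.

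The repair is the device your last paragraph gestures at but never executes: translate the \emph{zero extension} rather than $v$ itself. Let $\tilde v$ be $v$ extended by zero to $\rn$; since $\mathrm{supp}\,v\subset K_j$ is compact, one checks that $\tilde v\in W^{1,p}(\rn\setminus\Sigma)$, where $\Sigma:=\partial\boz\cap\mathrm{supp}\,v$ is compact and the weak gradient of $\tilde v$ off $\Sigma$ is the zero extension of $Dv$. Set $\hat w_t:=\tilde v(\cdot+ty)\in W^{1,p}(\rn\setminus\Sigma_t)$ with $\Sigma_t:=\Sigma-ty$. Because $\Sigma\subset K_j\Subset U_x$, for small $t$ we have $\Sigma_t\subset U_x$, and then the segment condition forces $\Sigma_t\cap\overline\boz=\emptyset$: a point $w$ in this intersection would lie in $\overline\boz\cap U_x$, so $w+ty\in\boz$, contradicting $w+ty\in\Sigma\subset\partial\boz$. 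By compactness $\mathrm{dist}(\Sigma_t,\overline\boz)>0$. Now $\hat w_t\in L^p(\rn)$ has compact support, so $\phi_\epsilon\ast\hat w_t$ genuinely belongs to $C_o^\fz(\rn)$; for $\epsilon$ less than half the above distance it converges, as $\epsilon\to0$, to $\hat w_t$ in $W^{1,p}$ of a neighborhood of $\overline\boz$, hence in $W^{1,p}(\boz)$; and continuity of translation in $L^p(\rn)$, applied separately to $\tilde v$ and to the zero extension of $Dv$, gives $\hat w_t\to v$ in $W^{1,p}(\boz)$ as $t\to0$. With $w_t$ replaced by $\hat w_t$, your ordering of the two limits and the assembly of the finitely many local approximants are correct.
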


Let us give the definition of Sobolev extension domains.
\begin{defn}\label{(p,q)-extension}
Let $1\leq q\leq p\leq\fz$. We say that a domain $\boz\subset\rr^n$ is a $(p,q)$-extension domain, if for every $u\in W^{1,p}(\boz)$, there exists a function $Eu\in W_{\rm loc}^{1,q}(\rr^n)$ with $Eu\big|_{\boz}\equiv u$ and 
\begin{equation}
\|Eu\|_{W^{1,q}(\rr^n\setminus\overline{\boz})}\leq C\|u\|_{\wp}\nonumber
\end{equation}
with a constant $C$ independent of $u$. 
\end{defn}

Lipschitz domains are typical examples of Sobolev extension domains. By the results due to Calder\'on and Stein \cite{stein}, Lipschitz domains are $(p, p)$-extension domains for $1\leq p\leq\fz$. 
For the definition of Lipschitz domains, please see \cite[Definition 4.4]{Evans}. As a generalization of the extension result for Lipschitz domains, Jones \cite{Jones} proved that $(\epsilon, \delta)$-domains are also $(p, p)$-extension domains. 
\begin{defn}\label{defn:ED}
We say $\boz\subset\rn$ is an $(\epsilon, \delta)$-domain for some positive constant $0<\epsilon<1$ and $\delta>0$ if whenever $z_1, z_2\in\boz$ with $|z_1-z_2|<\delta$, there is a rectifiable arc $\gamma\subset\boz$ joining $x$ to $y$ and satisfying 
$$l(\gamma)\leq\frac{1}{\epsilon}|z_1-z_2|$$
and
$$d(z, \boz^c)\geq\frac{\epsilon|z_1-z||z_2-z|}{|z_1-z_2|}\ {\rm for\ all}\ z\ {\rm on} \gamma.$$
\end{defn}
\begin{defn}\label{defn:ref}
Let $\boz\subset\rr^n$ be a domain. A self-homeomorphism $\mr:\widehat{\rr^n}\to\widehat{\rr^n}$ is called a reflection over $\partial\boz$, if $\mr(\widehat{\rr^n}\setminus\overline\boz)=\boz$, $\mr(\boz)=\widehat{\rr^n}\setminus\overline\boz$ and for every $z\in\partial\boz$, $\mr(z)=z$. 
\end{defn}


The following technical lemma justifies our terminology.
\begin{prop}\label{cut-off}
Let $\boz\subset\rn$ be a bounded domain with $|\partial\boz|=0$ and $\mr:\widehat{\rn}\to\widehat{\rn}$ be a reflection over $\partial\boz$. If $\mr$ induces a bounded linear extension operator from $W^{1,p}(\boz)$ to $W^{1,q}(\rn)$ in the sense of (\ref{uu1}) $($from $W^{1,p}(\rn\setminus\overline\boz)$ to $W^{1,q}(\rn)$, respectively$)$ for $1\leq q\leq p<\fz$, then $\boz$ ($\rn\setminus\overline\boz$, respectively) is a $(p, q)$-extension domain with a linear extension operator. 
\end{prop}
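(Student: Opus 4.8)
The plan is to show how the hypothesis \eqref{uu1} (the analysis for \eqref{uu2} being symmetric) upgrades a local extension valid only on the neighborhood $U\supset\partial\boz$ into a genuine global extension operator on all of $\rn$, with the $(p,q)$-extension estimate of Definition \ref{(p,q)-extension}. The essential difficulty \eqref{uu1} was designed to sidestep is that $\rn\setminus\overline\boz$ has infinite volume, so an $L^p$-bounded extension need not even lie in $L^q$ globally when $q<p$; the fix is to cut off far from the boundary and replace the function by a constant (or zero) outside a bounded set.

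First I would fix, using the boundedness of $\boz$, a radius so that $\boz\subset B(0,R_0)$ and choose a smaller neighborhood $U'$ of $\partial\boz$ with $\overline{U'}\subset U$; then pick a cut-off function $\eta\in C_o^\fz(\rn)$ with $0\le\eta\le1$, $\eta\equiv1$ on a neighborhood of $\overline\boz\setminus U'$ and $\eta$ supported in $B(0,2R_0)$, together with a second cut-off handling the transition inside $U\setminus\overline\boz$. Given $u\in W^{1,p}(\boz)$, by Lemma \ref{lem:density} I may first assume $u\in C_o^\fz(\rn)\cap W^{1,p}(\boz)$ and pass to the limit at the end, since the construction will be linear and bounded. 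I would define $Eu$ by setting $Eu=u$ on $\boz$, $Eu=v$ on the reflected collar $U\setminus\overline\boz$ where $v$ is the function furnished by \eqref{uu1}, and then glue $v$ to the exterior by multiplying the collar contribution by a cut-off that equals $1$ near $\partial\boz$ and vanishes before exiting $U$, extending by $0$ (or by the appropriate bounded constant) on $\rn\setminus(\boz\cup U)$. Because $|\partial\boz|=0$, the two pieces agree across $\partial\boz$ in the $W^{1,q}$ sense and no singular boundary term is created; the resulting $Eu$ restricts to $u$ on $\boz$ and is compactly supported.

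For the norm estimate I would split $\|Eu\|_{W^{1,q}(\rn\setminus\overline\boz)}$ into the contribution from the collar region, where the cut-off is active, and the far region where $Eu$ is constant and hence contributes nothing to the gradient and only a bounded-measure $L^q$ term. On the collar, the product rule gives $|D(\text{cut-off}\cdot v)|\lesssim |Dv|+|v|$, and since the cut-off lives inside the fixed bounded set $U\cap(\rn\setminus\overline\boz)$ of finite volume, Hölder's inequality lets me dominate the $L^q$ norms of $v$ and $Dv$ on this set by their $L^p$ norms up to the constant $|U|^{1/q-1/p}$. Feeding in \eqref{uu1} then yields $\|Eu\|_{W^{1,q}(\rn\setminus\overline\boz)}\lesssim \|v\|_{W^{1,q}(U)}\lesssim \|u\|_{W^{1,p}(U\cap\boz)}\le\|u\|_{W^{1,p}(\boz)}$, which is exactly the required bound. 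Linearity of $E$ is immediate since each step (restriction, the operator in \eqref{uu1}, multiplication by a fixed cut-off, extension by zero) is linear in $u$.

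I expect the main obstacle to be purely technical rather than conceptual: verifying that the glued function $Eu$ is genuinely weakly differentiable across the interface $\partial U'$ where the exterior cut-off turns on, and across $\partial\boz$ itself. The interface $\partial U'$ is harmless because I choose it to be smooth and both adjacent pieces are $W^{1,q}$ there, so the standard gluing lemma for Sobolev functions applies. The more delicate gluing is across $\partial\boz$, but here the hypothesis \eqref{uu1} already asserts that $v$ (equal to $u$ inside and $u\circ\mr$ outside) has a representative in $W^{1,q}(U)$ with no defect across $\partial\boz$; combined with $|\partial\boz|=0$, this removes the only genuine difficulty, so the remaining work is routine bookkeeping with cut-offs and Hölder's inequality, followed by the density approximation from Lemma \ref{lem:density} to extend the bound from smooth $u$ to all of $W^{1,p}(\boz)$.
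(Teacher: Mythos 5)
Your proposal follows essentially the same route as the paper's proof: take the function $v$ furnished by \eqref{uu1} on the neighborhood $U$, multiply by a fixed Lipschitz cut-off equal to $1$ on $\overline\boz$ and vanishing before exiting the bounded set, extend by zero, and control the $L^q$ norms on the finite-volume collar via H\"older's inequality, with linearity immediate from the construction. One remark: your initial reduction to $u\in C_o^\fz(\rn)\cap W^{1,p}(\boz)$ via Lemma \ref{lem:density} is both unnecessary (the hypothesis \eqref{uu1} already applies to every $u\in W^{1,p}(\boz)$, so no approximation is needed) and unjustified at this level of generality, since an arbitrary bounded domain with $|\partial\boz|=0$ need not satisfy the segment condition that Lemma \ref{lem:density} requires; simply delete that step and the rest of your argument matches the paper's.
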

\begin{proof}
We only consider the case of $\boz$, since the case of $\rn\setminus\overline\boz$ is analogous. Let $U\subset\rn$ be the corresponding open set which contains $\partial\boz$. For a given function $u\in W^{1,p}(\boz)$, we define a function $E_{\mr}(u)$ by setting
\begin{equation}
E_{\mr}(u)(z):=\left\{\begin{array}{ll}\label{equa:E_r(u)}
u(\mr(z)),&\ {\rm for}\ z\in U\setminus\overline{\boz},\\
0,&\ {\rm for}\ z\in\partial\boz,\\
u(z),&\ {\rm for}\ z\in \Omega.
\end{array}\right.
\end{equation}
Then $E_\mr(u)$ has a representative that belongs to $W^{1,q}(U)$ with 
$$\|E_\mr(u)\|_{W^{1,q}(U)}\leq C\|u\|_{W^{1,p}(\boz)}.$$
Let $\psi:\rn\to\rr$ be a Lipschitz function such that $\psi\big|_{\overline\boz}\equiv1$, $\psi\big|_{\rn\setminus U}\equiv 0$ and $0\leq \psi(z)\leq 1$ for every $z\in\rn$.  For every function $u\in W^{1,p}(\boz)$, we define a function on $\rn$ by setting 
\begin{equation}\label{equa:exglo}
\tilde E_\mr(u):=\psi\cdot E_\mr(u).
\end{equation}
Since $\psi$ is Lipschitz with $0\leq \psi\leq 1$, $\tilde E_\mr(u)$ has a representative that belongs to $W^{1,p}(\rn)$. Now
\begin{eqnarray}
\int_{\rr^n}|\tilde E_\mr(u)(z)|^{q}dz&\leq&\int_{\Omega}|u(z)|^{q}dz+\int_U|E_\mr(u)(z)|^qdz\nonumber\\
                                                       &\leq&\lf(\int_{\boz}|u(z)|^pdz+\int_\boz|Du(z)|^pdz\r)^{\frac{q}{p}},\nonumber
\end{eqnarray}
and 
\begin{eqnarray}
\int_{\rr^n}|D\tilde E_{\mr}(u)|^{q}dz&\leq&C\int_{U}|E\mr(u)D\psi|^qdz+C\int_{U}|\psi\nabla E_\mr(u)|^qdz\nonumber\\
                                                   & &+C\int_\boz|Du|^qdz\nonumber\\
                                         &\leq&C\lf(\int_{\Omega}|u|^pdz+\int_{\Omega}|\nabla u|^{p}dz\r)^{\frac{q}{p}}.\nonumber
\end{eqnarray}
By combining these two inequalities, we obtain that $\tilde E_\mr(u)\in W^{1,q}(\rn)$ with $\tilde E_\mr(u)\big|_\boz\equiv u$ and 
$$\|\tilde E_\mr(u)\|_{W^{1,q}(\rn)}\leq C\|u\|_{W^{1,p}(\boz)}.$$
Hence, we defined a bounded linear extension operator from $W^{1,p}(\boz)$ to $W^{1,q}(\rn)$ in (\ref{equa:exglo}).
\end{proof}
By Proposition \ref{cut-off}, in order to prove that a reflection $\mr$ over $\partial\boz^s$ can induce a bounded linear extension operator from $W^{1,p}(\boz)$ to $W^{1,q}(\rn)$ for some $1\leq q\leq p\leq\fz$, it suffices to prove that for every $u\in W^{1,p}(\boz)$, the function $E_\mr(u)$ defined in (\ref{equa:E_r(u)}) satisfies the inequality
\[\|E_\mr(u)\|_{W^{1,q}(U)}\leq C\|u\|_{W^{1,p}(\boz)}\]
with a constant $C$ independent of $u$.

Let $f:\boz\to\boz'$ be a homeomorphism. If for every $z\in U$ there is an open set containing $z$ and a constant $C>1$ such that for every $x, y\in U$, we have 
\[\frac{1}{C}|x-y|\leq |f(x)-f(y)|\leq C|x-y|,\]
we call it a locally bi-Lipschitz homeomorphism.

By combining results in \cite{Ukhlov, Vodop1, Vodop2, Vodop}, we obtain following two lemmas.
\begin{lem}\label{QCcompo}
Suppose that $f:\boz\to\boz'$ is a homeomorphism in the class $W^{1,1}_{\rm loc}(\boz, \boz')$. Fix $1\leq p<\fz$. Then the following assertions are equivalent:\\
$(1):$ for every locally Lipschitz function $u,$ the inequality
\[\int_{\boz}|D(u\circ f)(z)|^pdz\leq C\int_{\boz'}|Du(z)|^pdz\]
holds for a positive constant $C$ independent of $u$;\\
$(2):$ the inequality 
\[|Df(z)|^p\leq C(p)|J_f(z)|\]  
holds almost everywhere in $\boz$.
\end{lem}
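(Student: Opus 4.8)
The plan is to prove the equivalence by assembling three ingredients: the chain rule for Sobolev compositions, the area (change-of-variables) formula for the homeomorphism $f$, and a localization argument based on differentiating integrals over balls in $\boz$. The implication $(2)\Rightarrow(1)$ is the direct one, obtained by pointwise estimation and integration; the converse $(1)\Rightarrow(2)$ requires extracting a pointwise bound from an integral inequality by testing against linear functions and then differentiating.

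For $(2)\Rightarrow(1)$, I would fix a locally Lipschitz $u$ and use the chain rule $D(u\circ f)(z)=Du(f(z))\,Df(z)$, valid almost everywhere since $f\in W^{1,1}_{\rm loc}(\boz,\boz')$ is a homeomorphism that is differentiable almost everywhere. Then $(2)$ gives, almost everywhere,
\[
|D(u\circ f)(z)|^p\le |Du(f(z))|^p\,|Df(z)|^p\le C(p)\,|Du(f(z))|^p\,|J_f(z)|.
\]
Integrating over $\boz$ and applying the area formula $\int_\boz g(f(z))\,|J_f(z)|\,dz=\int_{\boz'}g(w)\,dw$ with $g=|Du|^p$ produces exactly the inequality in $(1)$.

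For $(1)\Rightarrow(2)$, I would localize and test with linear functions. Fix a ball $B(z_0,\rho)$ compactly contained in $\boz$ and a unit vector $a$, and choose $u$ to agree with $w\mapsto\langle a,w\rangle$ on $f(B(z_0,2\rho))$ and to be Lipschitz with compact support. Since $u\circ f(z)=\langle a,f(z)\rangle$ on $B(z_0,2\rho)$, we have $D(u\circ f)(z)=Df(z)^{T}a$ there, so $(1)$ yields
\[
\int_{B(z_0,\rho)}|Df(z)^{T}a|^p\,dz\le C\int_{\boz'}|Du(w)|^p\,dw\le C\,|a|^p\,|f(B(z_0,2\rho))|.
\]
Dividing by $|B(z_0,\rho)|$, rewriting $|f(B(z_0,2\rho))|=\int_{B(z_0,2\rho)}|J_f|\,dz$ via the area formula, and letting $\rho\to0$, the Lebesgue differentiation theorem over the balls $B(z_0,\rho)$ gives $|Df(z_0)^{T}a|^p\le C\,|a|^p\,|J_f(z_0)|$ at almost every $z_0$. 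Taking the supremum over unit vectors $a$, and noting that $\sup_{|a|=1}|Df(z_0)^{T}a|$ is the operator norm of $Df(z_0)$, we obtain $|Df(z_0)|^p\le C\,|J_f(z_0)|$, which is $(2)$.

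The main obstacle is not the formal computation but the validity of the two analytic tools used throughout: the almost-everywhere chain rule and the area formula for a homeomorphism assumed only to lie in $W^{1,1}_{\rm loc}$. Both rest on $f$ being differentiable almost everywhere and satisfying Luzin's condition $(N)$, so that null sets are not expanded and the pushforward of Lebesgue measure is absolutely continuous with density $|J_f|$. These properties are automatic in the plane but in dimension $n\ge3$ must be supplied by the regularity theory for such mappings in \cite{Ukhlov, Vodop1, Vodop2, Vodop}. Some care is also needed in the localization step so that the cutoff of $u$ does not contribute to $\int_{\boz'}|Du|^p$ outside $f(B(z_0,2\rho))$ in a way that spoils the limit; choosing the cutoff constant on $f(B(z_0,2\rho))$ and controlling its gradient on the surrounding annular region handles this.
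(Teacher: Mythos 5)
First, a point of reference: the paper does not prove this lemma at all --- it is quoted as a combination of results of Ukhlov and Vodop'yanov--Gol'dshtein \cite{Ukhlov, Vodop1, Vodop2, Vodop} --- so your attempt has to be measured against the standard proofs in those works. Your direction $(2)\Rightarrow(1)$ has the right structure (pointwise estimate plus change of variables), but your direction $(1)\Rightarrow(2)$ contains a genuine gap, and both directions lean on tools that are not available under the sole hypothesis $f\in W^{1,1}_{\rm loc}$.

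The gap in $(1)\Rightarrow(2)$ is the test-function construction. You need a compactly supported Lipschitz $u$ agreeing with $w\mapsto\langle a,w\rangle$ on $f(B(z_0,2\rho))$ with $\int_{\Omega'}|Du|^p\,dw\le C|a|^p\,|f(B(z_0,2\rho))|$. No cutoff can deliver this, because at this point of the argument nothing is known about the geometry of $f(B(z_0,2\rho))$: it is only a topological ball, and may be, say, a tube of length $1$ and radius $\epsilon$. If $a$ points along the axis of such a tube $S$, then $u$ varies by $1$ along $S$, so any compactly supported Lipschitz extension satisfies $\int|Du|^p\gtrsim \ccap_p(\sigma)$ for a fixed segment $\sigma\subset S$ of length $1/4$ (truncate $u$ to see this); for $p>n-1$ this capacity is a positive constant, while $|S|\approx\epsilon^{n-1}\to0$. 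So the bound you want fails exactly in the range $n-1<p<\fz$ in which the paper later invokes this implication (Lemma~\ref{lem:reftwo}, used for Theorem~\ref{thm:comp}). The proofs in the literature avoid this by localizing in the \emph{target} rather than the domain: one tests with $u_j(y)=(y_j-c_j)\eta(y)$, where $\eta$ is a cutoff adapted to a round ball $B\subset\Omega'$ (so $|Du_j|\le C$ and $\int_{\Omega'}|Du_j|^p\le C|B|$), obtains $\int_{f^{-1}(B)}|Df|^p\,dz\le C|B|$ for all such balls, hence $\int_E|Df|^p\,dz\le C|f(E)|$ for every Borel $E\subset\Omega$, and then differentiates the pullback measure $E\mapsto|f(E)|$ in the domain.

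Separately, both of your directions invoke a.e.\ differentiability of $f$ and Luzin's condition (N), which you assert are ``supplied by the regularity theory'' in the cited references. They are not: in dimension $n\ge3$ a $W^{1,1}_{\rm loc}$ homeomorphism need be neither differentiable a.e.\ nor satisfy (N) (Ponomarev-type examples violate (N) in $W^{1,p}$ for every $p<n$). What is true is a.e.\ \emph{approximate} differentiability (Federer), which gives the area formula only as an inequality, $\int_\Omega g(f(z))|J_f(z)|\,dz\le\int_{\Omega'}g(w)\,dw$ for $g\ge0$, using injectivity. That inequality is exactly what $(2)\Rightarrow(1)$ needs, so this direction of yours is repairable; one must also note that on the preimage of the null set where $u$ fails to be differentiable the same inequality forces $J_f=0$ a.e., whence $(2)$ gives $Df=0$ a.e.\ there and the chain-rule estimate survives. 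In $(1)\Rightarrow(2)$, however, the equality $|f(B)|=\int_B|J_f|$ that you use \emph{is} condition (N) and may fail. The correct replacement, and the key fact in the cited proofs, is that by the Federer decomposition and injectivity the measure $E\mapsto|f(E)|$ equals $\int_E|J_f|\,dz$ plus a measure singular with respect to Lebesgue measure; consequently its volume derivative still equals $|J_f(z_0)|$ at a.e.\ $z_0$, which is what the final differentiation step actually requires.
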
 
\begin{lem}\label{lem:pQc}
Let $1\leq q<p<\fz$. Suppose that $f:\boz\to\boz'$ is a homeomorphism in the class $W^{1, 1}_{\rm loc}(\boz, \boz')$. Then the following assertions are equivalent:\\
$(1):$ for every locally Lipschitz function $u$, the inequality 
\[\lf(\int_\boz|Du\circ f(z)|^qdz\r)^{\frac{1}{q}}\leq C\lf(\int_{\boz'}|Du(z)|^pdz\r)^{\frac{1}{p}}\]
holds for a positive constant $C$ independent of $u$;\\
$(2):$ \[\int_{\boz}\frac{|Df(z)|^{\frac{pq}{p-q}}}{|J_f(z)|^{\frac{q}{p-q}}}dz<\fz.\]
\end{lem}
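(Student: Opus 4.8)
The plan is to read this as the Vodop'yanov--Ukhlov characterization of the composition operator $u\mapsto u\circ f$ acting from the homogeneous Sobolev space $L^{1,p}(\boz')$ into $L^{1,q}(\boz)$ in the genuinely off-diagonal range $q<p$ (the diagonal case $q=p$ being Lemma \ref{QCcompo}). Write $r:=\frac{pq}{p-q}$, so that $\frac1q=\frac1p+\frac1r$, and set $K(z):=\frac{|Df(z)|}{|J_f(z)|^{1/p}}$ (with $K(z)=0$ where $J_f(z)=0$). Since $\frac{|Df|^{pq/(p-q)}}{|J_f|^{q/(p-q)}}=K^{r}$, condition $(2)$ is exactly the requirement $K\in L^{r}(\boz)$. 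The proof then splits into the elementary forward implication $(2)\Rightarrow(1)$ and the substantially harder converse $(1)\Rightarrow(2)$.

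For $(2)\Rightarrow(1)$, I would first record the two analytic facts that drive the estimate: the pointwise chain-rule bound $|D(u\circ f)(z)|\le|Df(z)|\,|Du(f(z))|$, valid a.e.\ for locally Lipschitz $u$ and $f\in W^{1,1}_{\rm loc}$, and the area-formula inequality $\int_{\boz}|Du(f(z))|^{p}|J_f(z)|\,dz\le\int_{\boz'}|Du(y)|^{p}\,dy$ for the Sobolev homeomorphism $f$. Then I would factor $|Df|^q|Du\circ f|^q=\lf(|Du\circ f|^{q}|J_f|^{q/p}\r)\cdot K^{q}$ and apply H\"older's inequality with the conjugate exponents $\frac pq$ and $\frac{p}{p-q}$. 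The first factor yields $\lf(\int_{\boz}|Du\circ f|^{p}|J_f|\r)^{q/p}\le\lf(\int_{\boz'}|Du|^{p}\r)^{q/p}$ after the area formula, while the second yields $\lf(\int_{\boz}K^{r}\r)^{(p-q)/p}$; taking $q$-th roots gives $(1)$ with constant $C=\|K\|_{L^{r}(\boz)}$, finite precisely by $(2)$.

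The converse $(1)\Rightarrow(2)$ is the heart of the matter. The clean heuristic is a duality argument: $K\in L^{r}$ is equivalent to $K^{q}\in L^{p/(p-q)}$, and since $(L^{p/(p-q)})^{*}=L^{p/q}$, it suffices to bound $\int_{\boz}K^{q}\phi$ by $C^{q}\|\phi\|_{L^{p/q}}$ for nonnegative $\phi$. Writing $\phi=\psi^{q/p}$ and attempting to realize $\psi(z)=|Du(f(z))|^{p}|J_f(z)|$ for a suitable $u$, inequality $(1)$ together with the change of variables would give, formally, $\int_{\boz}K^{q}\psi^{q/p}\le C^{q}\lf(\int_{\boz}\psi\r)^{q/p}$, which is exactly the desired dual estimate. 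The obstruction to simply producing such a $u$ is \emph{alignment}: to convert the chain-rule inequality into an equality one needs $\nabla u(f(z))$ to point along the direction of maximal stretching of $Df(z)$ with a prescribed magnitude, and this direction field on $\boz'$ is in general not a scalar multiple of a gradient.

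To circumvent alignment I would follow Vodop'yanov and Ukhlov and replace explicit test functions by a capacitary set function: for open $A\subseteq\boz'$ define
\[
\Phi(A):=\sup_{u}\frac{\lf(\int_{f^{-1}(A)}|D(u\circ f)|^{q}\r)^{r/q}}{\lf(\int_{A}|Du|^{p}\r)^{r/p}},
\]
the $r$-th power of the localized operator norm, the supremum being over locally Lipschitz $u$ supported in $A$. The program is then: (i) show $\Phi$ is monotone and, crucially, \emph{countably additive} on disjoint open sets --- this is where the relation $\frac1r=\frac1q-\frac1p$ with $q<p$ enters, combining local extremals through the $\ell^{p/q}$--$\ell^{1}$ duality so that the powers add exactly; (ii) deduce that $\Phi$ extends to a finite Borel measure, absolutely continuous with respect to Lebesgue measure; (iii) apply the Lebesgue differentiation theorem and identify the Radon--Nikodym density at a.e.\ point, using the a.e.\ differentiability of $f$ and local linear test functions, with the pointwise distortion; and (iv) conclude via change of variables that $\int_{\boz}K^{r}=\Phi(\boz')<\fz$, which is $(2)$. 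I expect step (i), the exact additivity of $\Phi$, to be the main obstacle, together with the preliminary regularity bootstrapping: one must first use $(1)$ to ensure that $f$ is differentiable a.e., has $J_f\neq0$ a.e., and satisfies the Luzin conditions that legitimize every change of variables above. I would dispose of these regularity points first, as the differentiation argument in (iii)--(iv) is routine once additivity and absolute continuity are in hand. This is precisely the material assembled from \cite{Ukhlov, Vodop1, Vodop2, Vodop}.
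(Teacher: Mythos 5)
The paper does not prove this lemma at all: it is imported from the cited literature (``By combining results in \cite{Ukhlov, Vodop1, Vodop2, Vodop}, we obtain following two lemmas''), so there is no in-paper argument to compare yours against; the benchmark is Ukhlov's theorem on composition operators. Your implication $(2)\Rightarrow(1)$ is correct and essentially complete: the factorization $|Df|^q|Du\circ f|^q=\bigl(|Du\circ f|^q|J_f|^{q/p}\bigr)K^q$, H\"older with exponents $p/q$ and $p/(p-q)$, and the area-formula inequality give exactly the claim; this is also the same computation the paper runs inline (the estimate (\ref{akobi}) in the proof of Theorem \ref{thm5}) for its $L^q$ bounds. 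Two measure-theoretic points deserve a sentence each rather than being glossed. First, on the set $\{J_f=0\}$ your convention $K=0$ breaks the factorization, so you should note that finiteness in $(2)$ forces $|Df|=0$ a.e.\ there, whence $|D(u\circ f)|\leq \lip(u)|Df|=0$ a.e.\ there. Second, since $u$ is only locally Lipschitz, $Du$ may fail to exist on a null set $N\subset\boz'$ whose preimage under $f$ need not be null; you need the standard fact that $J_f=0$ a.e.\ on $f^{-1}(N)$ for $f\in W^{1,1}_{\rm loc}$, which reduces this to the previous case.

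The genuine gap is the converse. What you give for $(1)\Rightarrow(2)$ is a program, not a proof: you correctly identify the Vodop'yanov--Ukhlov set function $\Phi$ and the four steps, and you equally correctly flag the countable additivity of $\Phi$ in step (i) as ``the main obstacle'' --- but that step \emph{is} the theorem. Proving that the localized operator norms raised to the power $r$ add over disjoint open sets (by combining near-extremal test functions with disjoint supports through the duality $\frac1r=\frac1q-\frac1p$), that the resulting set function extends to an absolutely continuous measure, and that its Radon--Nikodym density is identified a.e.\ with $K^r$ via linear test functions and the a.e.\ approximate differentiability of $f$, is precisely the content of \cite{Ukhlov}; leaving it at the level of a plan means the implication is not established. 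Note, however, that this gap is immaterial for the paper's applications: everywhere Lemma \ref{lem:pQc} is invoked (in the proofs of Theorems \ref{thm5} and \ref{thm6}), only the direction $(2)\Rightarrow(1)$ is used, and that direction you have proved.
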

The following lemma is a special case of \cite[Theorem 3]{Vodop3}.
\begin{lem}\label{reduinverse}
Let $\boz,\boz'\subset\rr^n$ be domains,  and let $f:\boz\rightarrow\boz'$ be a homeomorphism in the class $W_{\rm loc}^{1,p}(\boz, \boz')$ for a fixed $n-1<p<\fz$. If 
\begin{equation}\label{pdisin}
|Df(z)|^p\leq C(p)|J_f(z)|
\end{equation} 
holds for almost every $z\in\boz$, then the inverse homeomorphism $f^{-1}:\boz'\to\boz$ belongs to the class $W^{1,\frac{p}{p+1-n}}_{\rm loc}(\boz',\boz)$ with 
\begin{equation}\label{inverdisin}
|Df^{-1}(z)|^{\frac{p}{p+1-n}}\leq C(p)|J_{f^{-1}}(z)|
\end{equation}
for almost every $z\in\boz'$.
\end{lem}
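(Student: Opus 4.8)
The plan is to separate the statement into an algebraic core, which is elementary and reduces directly to the hypothesis \eqref{pdisin}, and one genuinely analytic point, the weak differentiability of $f^{-1}$, which is where the assumption $p>n-1$ is essential. Concretely, I would (i) record the pointwise relation between $Df^{-1}$ and $Df$ via the adjugate matrix, (ii) check that the exponent $\frac{p}{p+1-n}$ is exactly the one for which the dual inequality \eqref{inverdisin} becomes a restatement of \eqref{pdisin}, (iii) obtain the $L^{q'}$-integrability of $Df^{-1}$ by a change of variables, and (iv) justify that $f^{-1}$ is truly a Sobolev map so that its a.e.\ derivative is the distributional one.

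For the pointwise mechanism, note first that since $f$ is a homeomorphism in $W^{1,p}_{\rm loc}$ with $p>n-1$, it is differentiable almost everywhere, $p>n-1$ being the sharp threshold for a.e.\ differentiability of monotone Sobolev mappings (cf.\ \cite{HKARMA}). Hypothesis \eqref{pdisin} forces $J_f\geq0$ and $Df(z)=0$ wherever $J_f(z)=0$. At a point $z$ with $J_f(z)>0$ one has $Df^{-1}(f(z))=Df(z)^{-1}$, and the adjugate bound $|A^{-1}|=|\mathrm{adj}\,A|/|\det A|\leq c_n|A|^{n-1}/|\det A|$ yields
\begin{equation}\label{adj}
|Df^{-1}(f(z))|\leq c_n\frac{|Df(z)|^{n-1}}{J_f(z)}.
\end{equation}
Writing $q'=\frac{p}{p+1-n}$ and using $J_{f^{-1}}(f(z))=J_f(z)^{-1}$, raising \eqref{adj} to the power $q'$ reduces the desired inequality \eqref{inverdisin} to $c_n^{q'}|Df(z)|^{(n-1)q'}\leq C'J_f(z)^{q'-1}$. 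Since $(n-1)q'\cdot\frac{p+1-n}{n-1}=p$ and $(q'-1)\cdot\frac{p+1-n}{n-1}=1$, raising this last inequality to the power $\frac{p+1-n}{n-1}$ shows it is equivalent to \eqref{pdisin}. Thus the pointwise inequality \eqref{inverdisin} holds at every differentiability point with $J_f>0$, once $Df^{-1}$ is known to be a genuine (weak) derivative.

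For integrability I would establish that $f$ satisfies Lusin's condition (N), so that $|f(Z)|=\int_Z J_f=0$ for $Z=\{J_f=0\}$ and the identity $Df^{-1}(y)=Df(f^{-1}(y))^{-1}$ is valid for a.e.\ $y\in\boz'$. Then, for compact $K'\subset\boz'$, the area formula together with \eqref{adj} gives
\[
\int_{K'}|Df^{-1}(y)|^{q'}\,dy=\int_{f^{-1}(K')}|Df^{-1}(f(z))|^{q'}J_f(z)\,dz\leq c_n^{q'}\int_{f^{-1}(K')}\Big(\frac{|Df(z)|^{p}}{J_f(z)}\Big)^{\frac{n-1}{p+1-n}}\,dz,
\]
where I have used $(n-1)q'=\frac{(n-1)p}{p+1-n}$ and $1-q'=-\frac{n-1}{p+1-n}$ to assemble the distortion quotient. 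By \eqref{pdisin} the integrand is bounded by a constant, whence $\int_{K'}|Df^{-1}|^{q'}\leq c\,|f^{-1}(K')|<\fz$, so the a.e.\ derivative of $f^{-1}$ already lies in $L^{q'}_{\rm loc}(\boz')$.

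The main obstacle is the step glossed over above: showing that $f^{-1}$ is weakly differentiable, i.e.\ $f^{-1}\in W^{1,1}_{\rm loc}(\boz',\boz)$, so that its pointwise derivative coincides with its distributional derivative, and that $f$ satisfies condition (N), which legitimizes the change of variables. This is exactly where $p>n-1$ enters essentially: inverses of $W^{1,n-1}$ homeomorphisms are of bounded variation, and the extra integrability supplied by \eqref{pdisin} upgrades this to absolute continuity on almost every line, hence to $W^{1,1}_{\rm loc}$ regularity. Once this is in hand, the computation above promotes it to $W^{1,q'}_{\rm loc}$ and delivers \eqref{inverdisin}; equivalently, with $f^{-1}\in W^{1,1}_{\rm loc}$ known, one may apply Lemma \ref{QCcompo} to $f^{-1}$ with exponent $q'$, so that \eqref{inverdisin} is the same as $f^{-1}$ inducing a bounded composition operator on $W^{1,q'}$. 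Carrying out the absolute-continuity argument in full is precisely the content of \cite[Theorem 3]{Vodop3}, which I would invoke; compare also the duality discussion in \cite{Ukhlov}.
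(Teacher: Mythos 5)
The paper gives no argument for this lemma at all: it simply records it as a special case of \cite[Theorem 3]{Vodop3}, and your proposal ultimately rests on exactly that citation for the one genuinely hard step (weak differentiability of $f^{-1}$ and condition (N)), so you are taking essentially the same route. Your added material is correct --- the adjugate bound, the verification that $(n-1)q'\cdot\frac{p+1-n}{n-1}=p$ and $(q'-1)\cdot\frac{p+1-n}{n-1}=1$ make \eqref{inverdisin} the exact dual of \eqref{pdisin}, and the change-of-variables computation bounding $\int_{K'}|Df^{-1}|^{q'}$ by $C|f^{-1}(K')|$ all check out --- and serves as a sound sanity check of the cited result.
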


\section{Main Results}
In this section, we show that the Sobolev extension results for outward cuspidal domain $\boz^s\subset\rr^n$ from \cite{Mazya1, Mazya2, Mazya3} can be achieved by bounded linear extension operators induced by reflections, except possibly for the case from $W^{1,\frac{1+(n-1)s}{n}}(\boz^s)$ to $W^{1, n-1}(\rn)$. Let us begin by introducing two reflections. 

\subsection{Reflection $\mr_1$ over $\partial\boz^s$}\label{sec:ref1}
In order to introduce the reflection $\mr_1:\widehat{\rr^n}\to\widehat{\rr^n}$ over $\partial\boz^s$, we define a domain $\Delta\subset\rr^n$ by setting
\begin{equation}\label{equa:delta}
\Delta:=\lf\{(t, x)\in\rr\times\rr^{n-1}; \frac{-1}{2}<t<\frac{1}{2}, |x|<\frac{1}{2}\r\}\cup\boz^s.
\end{equation}
 See Figure \ref{fig:5}. To begin, we divide $\Delta\setminus\overline{\boz^s}$ into three parts $A, B, C$ by setting 
\begin{center}
$A:=\lf\{(t, x)\in\rr\times\rr^{n-1}; \frac{-1}{2}<t\leq 0, |x|\leq|t|\r\}$,
\end{center}
\begin{center}
$B:=\lf\{(t, x)\in\rr\times\rr^{n-1}; \frac{-1}{2}< t<\frac{1}{2}, |t|\leq|x|<\frac{1}{2}\r\}$
\end{center}
and
\begin{center}
$C:=\lf\{(t, x)\in\rr\times\rr^{n-1}; 0\leq t<\frac{1}{2}, t^{s}\leq|x|\leq t\r\}$.
\end{center}
\begin{figure}[htbp]
\centering
\includegraphics[width=0.7\textwidth]
{Delta.png}\caption{The domain $\Delta$}\label{fig:5}
\end{figure} 
We define a subdomain $\boz^s_1\subset\boz^s$ by setting 
\begin{equation}\label{subdomain}
\boz^s_1:=\lf\{(t, x)\in\boz^s; 0<t<\frac{1}{2}, |x|<t^{s}\r\}.
\end{equation}
We will construct a reflection $\mr_1$ which maps $\Delta\setminus\overline{\boz^s}$ onto $\boz_1^s$. We define $\mr_1$ on $\Delta\setminus\overline{\boz^s}$ by setting
\begin{equation}\label{HOMEOMOR'}
\mr_1(t,x):=\left\{\begin{array}{ll}
\lf(-t, \frac{1}{6}|t|^{s-1}x\r),&\ \  {\rm if}\ \ (t, x)\in A,\\

\lf(|x|, \frac{t}{6}|x|^{s-2}x+\frac{1}{3}|x|^{s-1}x\r),&\ \  {\rm if}\ \ (t, x)\in B,\\

\lf(t, \frac{t^{s-1}}{2(t^{s-1}-1)}x+\lf(t^s-\frac{t^{2s-1}}{2(t^{s-1}-1)}\r)\frac{x}{|x|}\r),&\ \ {\rm if}\ \ (t, x)\in C.
\end{array}\right.
\end{equation}
We extend $\mr_1$ to $\partial\boz^s$ as the identity. Since both $\partial\Delta$ and $\partial(\boz^s\setminus\boz^s_1)$ are bi-Lipschitz equivalent to the unit sphere, it is easy to check that we can construct a reflection $\mr_1:\widehat{\rn}\to\widehat{\rn}$ over $\partial\boz^s$ such that $\mr_1$ is defined as above on $\Delta\setminus\boz^s$, and $\mr_1$ is bi-Lipschitz on $B(\boz^s, 1)\setminus\Delta$.

For $(t, x)\in\mathring A$, the resulting differential matrix of $\mr_1$ is 
\begin{eqnarray}\label{differeninA}
D\mr_1(t, x)
&=&
\left(
 \begin{array}{ccccc}
 -1&~~ 0 &~~ 0 &~~\cdots &~~ 0\\
\frac{1-s}{6}|t|^{s-2}x_1  &~~ \frac{1}{6}|t|^{s-1}&~~ 0 &~~ \cdots &~~0\\
\frac{1-s}{6}|t|^{s-2}x_2 &~~ 0 &~~ \frac{1}{6}|t|^{s-1}&~~ \cdots &~~0\\

\vdots &~~ \vdots &~~ \vdots &~~ \ddots &~~ 0\\
 \frac{1-s}{6}|t|^{s-2}x_{n-1}&~~ 0 &~~ \cdots &~~ 0&~~\frac{1}{6}|t|^{s-1}\\
\end{array}
\right). \\ \nonumber
\end{eqnarray}
Hence, for every $(t, x)\in\mathring A$, we have
\begin{equation}\label{equa:ref11}
|D\mr_1(t, x)|\ls1\ {\rm and}\ |J_{\mr_1}(t, x)|\sim_c |t|^{(n-1)(s-1)}.
\end{equation}

For $(t, x)\in\mathring B$, the resulting differential matrix $\mr_1$ is 
\begin{eqnarray}\label{differeninB}
D\mr_1(t, x)
&=&
\left(
 \begin{array}{ccccc}
 0&~~ \frac{x_1}{|x|} &~~ \frac{x_2}{|x|} &~~\cdots &~~ \frac{x_{n-1}}{|x|}\\
\frac{x_1}{6}|x|^{s-2}  &~~ A^1_1(t,x)&~~ A^1_2 (t, x)&~~\cdots &~~A^1_{n-1}(t, x)\\
 \frac{x_2}{6}|x|^{s-2}&~~ A^2_1(t, x) &~~ A^2_2(t, x) &~~\cdots&~~A^2_{n-1}(t, x)\\
 \vdots&~~\vdots&~~\vdots&~~\cdots&~~\vdots\\
 \frac{x_{n-1}}{6}|x|^{s-2} &~~ A^{n-1}_1(t, x) &~~ A^{n-1}_2(t, x) &~~\cdots &~~A^{n-1}_{n-1}(t, x)
\end{array}
\right), \\ \nonumber
\end{eqnarray}
where, for every $i, j\in\{1, 2,\cdots, n-1\}$, we set
 \begin{equation}
 A^i_j(t,x):=\left\{\begin{array}{ll}
\lf(\frac{t}{6}|x|^{s-2}+\frac{1}{3}|x|^{s-1}\r)+\lf(\frac{t}{6}(s-2)\frac{x_i^2}{|x|^{4-s}}+\frac{s-1}{3}\frac{x_i^2}{|x|^{3-s}}\r),&\ \  {\rm if}\ \ i=j,\\

\frac{t}{6}(s-2)\frac{x_ix_j}{|x|^{4-s}}+\frac{s-1}{3}\frac{x_ix_j}{|x|^{3-s}},&\ \ {\rm if}\ \ i\neq j.
\end{array}\right.\nonumber
 \end{equation}
Since $|t|\leq |x|<\frac{1}{2}$, a simple computation gives 
\begin{equation}\label{equa:ref12}
|D\mr_1(t, x)|\ls1\ {\rm and}\ |J_{\mr_1}(t, x)|\sim_c\sum_{k=1}^{n-1}\frac{x_k^2}{6}|x|^{s-3}\prod_{i\neq k}A^i_1\sim_c|x|^{(n-1)(s-1)}.
\end{equation} 

For $(t, x)\in\mathring C$, the resulting differential matrix of $\mr_1$ is 
\begin{eqnarray}\label{differeninC}
D\mr_1(t, x)
&=&
\left(
 \begin{array}{ccccc}
 1&~~ 0 &~~ 0&~~\cdots&~~0\\
 A^1_t(t, x) &~~ A^1_1(t, x) &~~ A^1_2(t, x)&~~\cdots &~~ A^1_{n-1}(t, x)\\
 A^2_t(t, x)&~~ A^2_1(t, x) &~~ A^2_2(t, x) &~~\cdots&~~ A^2_{n-1}(t, x)\\
 \vdots&~~\vdots&~~\vdots&~~\cdots&~~\vdots\\
 A^{n-1}_t(t, x) &~~ A^{n-1}_1(t, x) &~~ A^{n-1}_2(t, x)&~~\cdots &~~ A^{n-1}_{n-1}(t, x)
\end{array}
\right),\\ \nonumber
\end{eqnarray}
where, for every $i, j\in\{1, 2,\cdots, n-1\}$, we have 

\begin{equation}
A^i_j(t, x):=\left\{\begin{array}{ll}
\frac{t^{s-1}}{2(t^{s-1}-1)}+\lf(t^s-\frac{t^{2s-1}}{2(t^{s-1}-1)}\r)\lf(\frac{1}{|x|}-\frac{x_i^2}{|x|^3}\r),&\ \  {\rm if}\ \ i=j,\\

\lf(\frac{t^{2s-1}}{2(t^{s-1}-1)}-t^s\r)\frac{x_ix_j}{|x|^3},&\ \ {\rm if}\ \ i\neq j.
\end{array}\right.\nonumber
\end{equation} 
and
\begin{eqnarray}
A^i_t(t, x)&:=&x_i\lf(\frac{(s-1)t^{s-2}}{2(t^{s-1}-1)}-\frac{(s-1)t^{2s-3}}{2(t^{s-1}-1)^2}\r)\nonumber\\
       & &+\frac{x_i}{|x|}\lf(st^{s-1}-\frac{(2s-1)t^{2s-2}}{2(t^{s-1}-1)}+\frac{(s-1)t^{3s-3}}{2(t^{s-1}-1)^2}\r).\nonumber
\end{eqnarray}
Since $t^s<|x|<t$, a simple computation gives 
\begin{equation}\label{equa:ref13}
|D\mr_1(t, x)|\ls1\ {\rm and}\ |J_{\mr_1}(t, x)|\sim_ct^{(n-1)(s-1)}.
\end{equation}

Finally, since $\mr_1$ is bi-Lipschitz on $B(\boz^s, 1)\setminus\overline{\Delta}$, there exists a positive constant $C>1$ such that for almost every $(t, x)\in B(\boz^s, 1)\setminus\overline{\Delta}$, we have 
\begin{equation}\label{equa:upper}
\frac{1}{C}\leq |D\mr_1(t, x)|\leq C \ {\rm and}\ \frac{1}{C}\leq |J_{\mr_1}(t, x)|\leq C.\nonumber
\end{equation}

It is easy to see that the restriction of $\mr_1$ to $B(\boz^s, 1)\setminus(\boz^s\cup\{0\})$ is locally bi-Lipschitz.

\subsection{Reflection $\mr_2$ over $\partial\boz^s$}
In order to introduce the reflection $\mr_2:\widehat{\rr^n}\to\widehat{\rr^n}$ over $\partial\boz^s$, we define a domain $\Delta'\subset\rr^n$ by setting
\begin{equation}\label{cone}
\Delta':=\left\{(t, x)\in\rr\times\rr^{n-1}=\rr^{n}:\frac{-1}{2}<t<\frac{1}{2}, |x|<\left(\frac{1}{2}\right)^{s}\right\}\cup \Omega^s.
\end{equation}
See Figure \ref{fig:6}.
\begin{figure}[htbp]
\centering
\includegraphics[width=0.8\textwidth]
{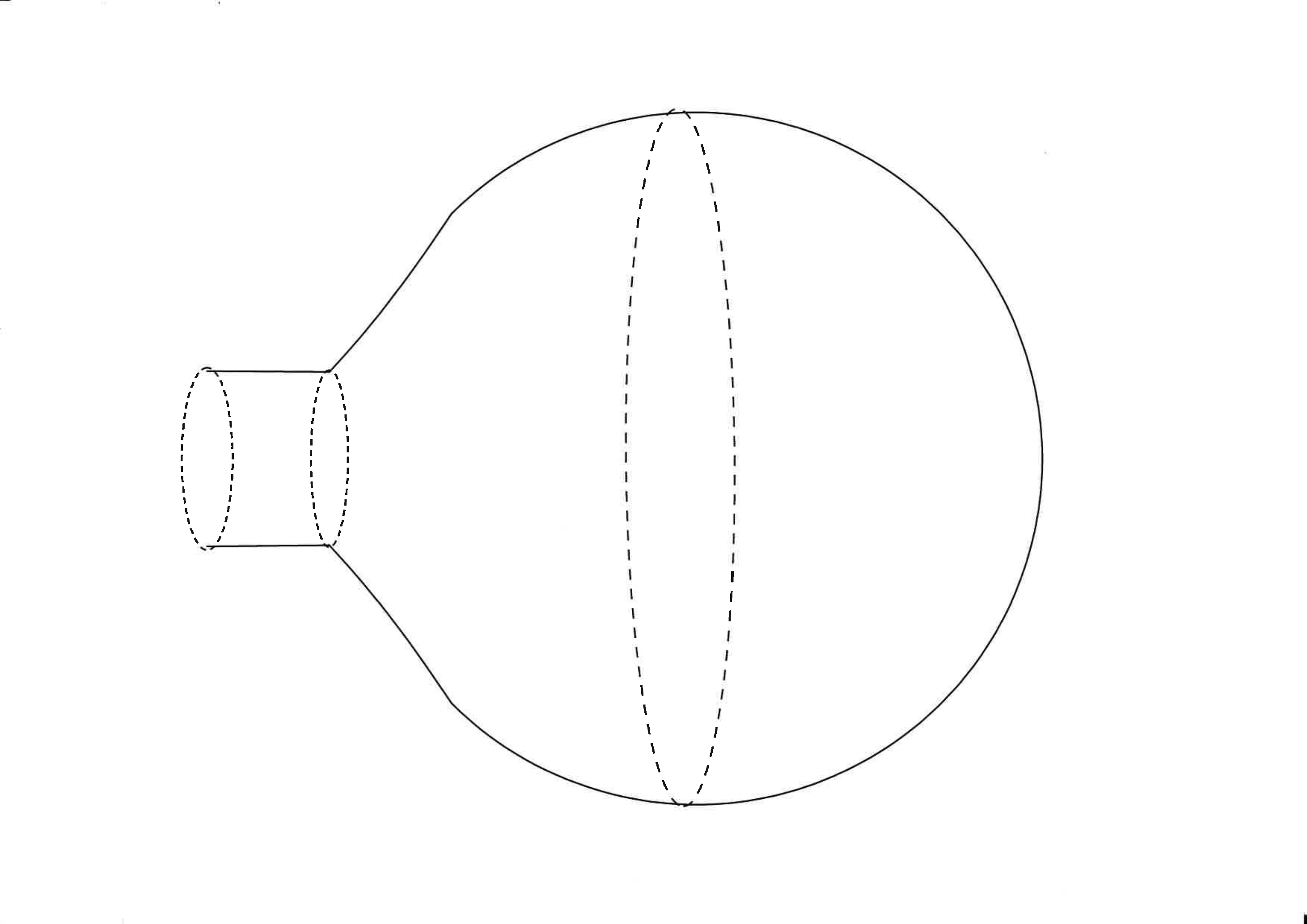}\caption{The domain $\Delta'$}\label{fig:6}
\end{figure} 
We divide $\Delta'\setminus\overline{\boz^s}$ into two parts $D, E$ by setting
\begin{equation}
D:=\lf\{(t, x)\in\rr\times\rr^{n-1}=\rr^n:\frac{-1}{2}<t\leq 0, |x|\leq|t|^{s}\r\},\nonumber
\end{equation}
and 
\begin{equation}
E:=\lf\{(t, x)\in\rr\times\rr^{n-1}=\rr^n: \frac{-1}{2}<t<\frac{1}{2}, |t|^s<|x|<\lf(\frac{1}{2}\r)^s\r\}.\nonumber
\end{equation}
We will construct a reflection $\mr_2$ over $\partial\boz^s$ which maps $\Delta'\setminus\overline{\boz^s}$ onto $\boz^s_1$. We define the reflection $\mr_2$ on $\Delta'\setminus\overline{\boz^s}$ by setting
\begin{equation}\label{HOMEOMOR1}
\mr_2(t, x):=\left\{\begin{array}{ll}
\lf(-t, \frac{1}{2}x\r),&\ \  {\rm if}\ \ (t, x)\in D,\\
\lf(|x|^{\frac{1}{s}},\frac{t}{4}\frac{x}{|x|^{\frac{1}{s}}}+\frac{3}{4}x\r),&\ \  {\rm if}\ \ (t, x)\in E.
\end{array}\right.
\end{equation}
We extend $\mr_2$ on $\partial\boz^s$ as the identity. Since both $\partial\Delta'$ and $\partial(\boz^s\setminus\boz^s_1)$ are bi-Lipschitz equivalent to the unit sphere, we can construct a reflection $\mr_2$ which is defined on $\Delta'\setminus\overline{\boz^s}$ as in (\ref{HOMEOMOR1}) and is bi-Lipschitz on $B(\boz^s, 1)\setminus\Delta'$.

For $z=(t, x)\in\mathring D$, the resulting differential matrix of $\mr_2$ is 
\begin{eqnarray}\label{differeninA1}
D\mr_2(t, x)
&=&
\left(
 \begin{array}{ccccc}
-1 &~~ 0 &~~ 0 &~~\cdots &~~0\\
0  &~~ \frac{1}{2}&~~ 0 &~~\cdots &~~0\\
0 &~~ 0 &~~ \frac{1}{2}&~~\cdots &~~0\\
\vdots &~~ \vdots &~~ \vdots &~~\ddots &~~\vdots\\
0 &~~ 0 &~~0&~~\cdots &~~\frac{1}{2}
\end{array}
\right). \\ \nonumber
\end{eqnarray}
Hence,
\begin{equation}\label{equa:ref21}
|D\mr_2|(t, x)=1\ {\rm and}\ |J_{\mr_2}(t, x)|=\frac{1}{2^{n-1}}. 
\end{equation}

For $z=(t, x)\in\mathring E$, the resulting matrix of $\mr_2$ is
\begin{eqnarray}\label{differeninB1}
D\mr_2(t, x)
&=&
\left(
 \begin{array}{ccccc}
0 &~~\frac{x_1}{s|x|^{2-\frac{1}{s}}} &~~ \frac{x_2}{s|x|^{2-\frac{1}{s}}}&~~\cdots&~~\frac{x_{n-1}}{s|x|^{2-\frac{1}{s}}}\\
\frac{x_1}{4|x|^{\frac{1}{s}}}  &~~ A^1_1(t, x)&~~ A^1_2(t, x) &~~\cdots &~~ A^1_{n-1}(t, x)\\
\frac{x_2}{4|x|^{\frac{1}{s}}} &~~ A^2_1(t, x) &~~ A^2_2(t, x) &~~\cdots &~~ A^2_{n-1}(t, x)\\
\vdots &~~\vdots &~~\vdots &~~\cdots &~~\vdots\\
\frac{x_{n-1}}{4|x|^{\frac{1}{s}}} &~~ A^{n-1}_1(t, x) &~~ A^{n-1}_2(t, x) &~~\cdots &~~ A^{n-1}_{n-1}(t, x)
\end{array}
\right), \\ \nonumber
\end{eqnarray}
where, for every $i, j\in \{1, 2, \cdots, n-1\}$, we have 
\begin{equation}
A^i_j(t, x):=\left\{\begin{array}{ll}
\frac{t}{4}\lf(\frac{1}{|x|^{\frac{1}{s}}}-\frac{x_i^2}{s|x|^{2+\frac{1}{s}}}\r)+\frac{3}{4},&\ \  {\rm if}\ \ i=j,\\
\frac{-tx_ix_j}{4s|x|^{2+\frac{1}{s}}},&\ \ {\rm if}\ \ i\neq j.
\end{array}\right.
\end{equation}
After a simple computation, for every $(t, x)\in\mathring E$ we have
\begin{equation}\label{equa:ref22}
|D\mr_2(t, x)|\ls\frac{1}{|x|^{\frac{s-1}{s}}}\ \ {\rm and}\ \ |J_{\mr_2}(t, x)|\sim_c\sum_{k=1}^{n-1}\frac{x_k^2}{4s|x|^2}\prod_{i\neq k}A^i_i=\sim_{c}1.
\end{equation}

Finally, since $\mr_2$ is bi-Lipschitz on $B(\boz^s, 1)\setminus\overline{\Delta'}$, there exists a positive constant $C>1$ such that for almost every $(t, x)\in B(\boz^s, 1)\setminus\overline{\Delta'}$, we have 
\begin{equation}\label{equa:lipschitz}
\frac{1}{C}\leq |D\mr_2(t, x)|\leq C\ {\rm and}\ \frac{1}{C}\leq |J_{\mr_2}(t, x)|\leq C.
\end{equation}
It is easy to see that the restriction of $\mr_2$ to $B(\boz^s, 1)\setminus({\boz^s\cup\{0\}})$ is locally bi-Lipschitz.

\subsection{Proof of Theorem \ref{main resu}}

We prove Theorem \ref{main resu} in two parts, considering the two reflections separately.

\begin{thm}\label{thm5}
Let $\boz^s\subset\rr^n$ be an outward cuspidal domain with the degree $s>1$. Then the reflection $\mr_1:\widehat{\rr^n}\to\widehat{\rr^n}$ over $\partial\boz^s$ induces a bounded linear extension operator from $W^{1,p}(\boz^s)$ to $W^{1,q}(\rr^n)$, whenever $\frac{1+(n-1)s}{n}<p< \fz$ and $1\leq q< \frac{np}{1+(n-1)s}$.
\end{thm}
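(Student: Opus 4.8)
The plan is to reduce the claim, via Proposition~\ref{cut-off}, to a single local estimate for the reflected function, and then to reduce that estimate to one integrability condition handled by the composition theorem, Lemma~\ref{lem:pQc}. Take $U:=B(\boz^s,1)$ as the required neighborhood of $\partial\boz^s$; by Proposition~\ref{cut-off} it suffices to prove, for every $u\in W^{1,p}(\boz^s)$,
\[
\|E_{\mr_1}(u)\|_{W^{1,q}(U)}\leq C\|u\|_{W^{1,p}(\boz^s)},
\]
where $E_{\mr_1}(u)$ is the function in \eqref{equa:E_r(u)}. On $U\cap\boz^s$ one has $E_{\mr_1}(u)=u$, so only the behaviour of $u\circ\mr_1$ on $U\setminus\overline{\boz^s}$ is at issue; there $\mr_1$ is a homeomorphism onto its image with $\mr_1(U\setminus\overline{\boz^s})\subset\boz^s$ and, by \eqref{equa:ref11}--\eqref{equa:ref13} together with the bi-Lipschitz behaviour off $\Delta$, lies in $W^{1,\fz}_{\rm loc}(U\setminus\overline{\boz^s})$. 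I first note that, because $s>1$, we have $\frac{np}{1+(n-1)s}<p$, so the hypotheses force $q<p$ throughout; this places us squarely in the regime of Lemma~\ref{lem:pQc}. By the density Lemma~\ref{lem:density} it is enough to establish the estimate for locally Lipschitz $u$ and then pass to a limit, the resulting operator being linear.

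For the gradient term I would apply Lemma~\ref{lem:pQc} with $f=\mr_1$, $\boz=U\setminus\overline{\boz^s}$ and $\boz'=\mr_1(U\setminus\overline{\boz^s})\subset\boz^s$. This yields, for locally Lipschitz $u$,
\[
\lf(\int_{U\setminus\overline{\boz^s}}|D(u\circ\mr_1)(z)|^q\,dz\r)^{\frac1q}\leq C\lf(\int_{\boz^s}|Du(z)|^p\,dz\r)^{\frac1p},
\]
provided condition $(2)$ of that lemma holds, namely
\[
\int_{U\setminus\overline{\boz^s}}\frac{|D\mr_1(z)|^{\frac{pq}{p-q}}}{|J_{\mr_1}(z)|^{\frac{q}{p-q}}}\,dz<\fz.
\]
Since $|D\mr_1|\ls1$ on all of $A\cup B\cup C$ by \eqref{equa:ref11}--\eqref{equa:ref13}, and $\mr_1$ is bi-Lipschitz on the bounded set $B(\boz^s,1)\setminus\Delta$, this integral is dominated by $\int_{U\setminus\overline{\boz^s}}|J_{\mr_1}(z)|^{-\frac{q}{p-q}}\,dz$ plus a finite contribution from the bi-Lipschitz part.

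The heart of the argument is the evaluation of this integral on the three pieces, which is where the sharp range is born. By \eqref{equa:ref11}--\eqref{equa:ref13} one has $|J_{\mr_1}|\sim_c r^{(n-1)(s-1)}$ with $r=|t|$ on $A$ and $C$ and $r=|x|$ on $B$. Integrating out the remaining variables over the relevant slice---the ball $\{|x|\leq|t|\}$ on $A$, the segment $\{|t|\leq|x|\}$ on $B$, the annulus $\{t^s\leq|x|\leq t\}$ on $C$---produces in each case a factor comparable to $r^{n-1}$, so that all three contributions reduce to
\[
\int_0^{1/2}r^{\,n-1-(n-1)(s-1)\frac{q}{p-q}}\,dr.
\]
This integral converges precisely when $n-1-(n-1)(s-1)\frac{q}{p-q}>-1$, and a direct rearrangement shows this is equivalent to $q<\frac{np}{1+(n-1)s}$; the lower bound $p>\frac{1+(n-1)s}{n}$ in the statement is exactly the requirement that this range contain some admissible $q\geq1$. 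Thus the range in the statement is exactly the range of convergence of this one integral.

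It remains to control the $L^q$ term and to glue. For the former I would use the area formula and H\"older's inequality with exponents $\frac pq$ and $\frac{p}{p-q}$: changing variables $w=\mr_1(z)$,
\[
\int_{U\setminus\overline{\boz^s}}|u(\mr_1(z))|^q\,dz\leq\lf(\int_{\boz^s}|u(w)|^p\,dw\r)^{\frac qp}\lf(\int_{U\setminus\overline{\boz^s}}|J_{\mr_1}(z)|^{-\frac{q}{p-q}}\,dz\r)^{\frac{p-q}p},
\]
so that this term reduces to the very same integral and is finite in the same range. Combining the two bounds and gluing the pieces $u$ and $u\circ\mr_1$ across $\partial\boz^s$ gives the local estimate; the gluing is legitimate because $|\partial\boz^s|=0$, $\mr_1$ is the identity on $\partial\boz^s$, and for Lipschitz $u$ the function $E_{\mr_1}(u)$ is continuous, so the two $W^{1,q}$ pieces have matching traces and assemble to a function in $W^{1,q}(U)$. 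A density argument then extends the estimate to all of $W^{1,p}(\boz^s)$ and produces a bounded linear operator. I expect the main obstacle to be technical rather than conceptual: justifying the change of variables and the gluing for $\mr_1$, which is only locally bi-Lipschitz on $B(\boz^s,1)\setminus(\boz^s\cup\{0\})$ and degenerates at the cusp tip $0$. The estimates are arranged exactly so that, once $q<\frac{np}{1+(n-1)s}$, the tip contributes nothing.
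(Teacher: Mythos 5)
Your proposal is correct and follows essentially the same route as the paper's proof: reduction by density (Lemma \ref{lem:density}) to nice functions, the H\"older/change-of-variables estimate for the $L^q$ term, verification of the integrability condition of Lemma \ref{lem:pQc} for the gradient term via the Jacobian asymptotics \eqref{equa:ref11}--\eqref{equa:ref13} on $A$, $B$, $C$ (yielding exactly the integral $\int_0^{1/2}r^{\,n-1-(n-1)(s-1)q/(p-q)}\,dr$ and the sharp range $q<\frac{np}{1+(n-1)s}$), and a gluing-plus-limiting argument handled by the local Lipschitz regularity of the reflected function away from the cusp tip. The only cosmetic difference is that you cite Lemma \ref{lem:pQc} by name for the gradient bound, where the paper writes Lemma \ref{QCcompo} but in fact verifies precisely the condition of Lemma \ref{lem:pQc}.
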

\begin{proof}
Since $\boz^s$ satisfies the segment condition, by Lemma \ref{lem:density}, $C_o^\fz(\rn)\cap W^{1,p}(\boz^s)$ is dense in $W^{1,p}(\boz^s)$. Let $u\in C_o^\fz(\rn)\cap W^{1,p}(\boz^s)$ be arbitrary. We define a function $E_{\mr_1}(u)$ as in (\ref{equa:E_r(u)}) and another function $w$ by setting 
\begin{equation}\label{defn:funcw}
w(z):=\left\{\begin{array}{ll}
u\circ\mr_1(z),&\ \  {\rm if}\ \ z\in B(\boz^s, 1)\setminus\overline{\boz^s},\\
u(z),&\ \  {\rm if}\ \ z\in \overline{\boz^s}.
\end{array}\right.
\end{equation}
Since $u\in C_o^\fz(\rn)\cap W^{1,p}(\boz^s)$ and $\mr_1$ is locally Lipschitz on $B(\boz^s, 1)\setminus(\boz^s\cup\{0\})$, the function $w$ is locally Lipchitz on $ B(\boz^s, 1)\setminus\{0\}$. We claim that $w\in W^{1,q}(B(\boz^s, 1))$ with 
 \[\|w\|_{W^{1,q}(B(\boz^s, 1))}\leq C\|u\|_{W^{1,p}(\boz^s)}\]
 for a constant $C>1$ independent of $u$. These claims follow if we prove the above norm estimate with 
$B(\Omega^s,1)$ replaced by $B(\Omega^s,1)\setminus \{0\}.$ Next, since $w$ is locally Lipschitz and  $|\partial \Omega^s|=0,$ it suffices to estimate the norm over the union of
$\Omega^s$ and $B(\Omega^s,1)\setminus\overline{\Omega^s}.$ Since $w=u\in W^{1,p}(\Omega^s)$ on $\Omega^s,$ our domain $\Omega^s$ has finite measure and $q<p,$ we are reduced to estimating the norm over the second set in question. On this set, $w=u\circ\mr_1$ almost everywhere and hence it suffices to prove the inequality
\begin{equation}\label{equa:norm1}
\lf(\int_{B(\boz^s, 1)\setminus\overline{\boz^s}}|u\circ\mr_1(z)|^qdz\r)^{\frac{1}{q}}\leq C\lf(\int_{\boz^s}|u(z)|^pdz\r)^{\frac{1}{p}}
\end{equation}
and the inequality
\begin{equation}\label{equa:norm2}
\lf(\int_{B(\boz^s, 1)\setminus\overline{\boz^s}}|D(u\circ\mr_1)(z)|^qdz\r)^{\frac{1}{q}}\leq C\lf(\int_{\boz^s}|Du(z)|^pdz\r)^{\frac{1}{p}}.
\end{equation}
It is easy to see that
\[B(\boz^s, 1)\setminus\overline{\boz^s}=(B(\boz^s, 1)\setminus\Delta)\cup(\Delta\setminus\overline{\boz^s})\]
and $\Delta\setminus\overline{\boz^s}=A\cup B\cup C$. Since 
\[|\partial\Delta|=|\partial A|=|\partial B|=|\partial C|=0,\] 
we have 
\begin{eqnarray}\label{equa:sum1}
\int_{B(\boz^s, 1)\setminus\overline{\boz^s}}|u\circ\mr_1(z)|^qdz&=&\int_{B(\boz^s, 1)\setminus\overline{\Delta}}|u\circ\mr_1(z)|^qdz\\
       & &+\lf(\int_{\mathring A}+\int_{\mathring B}+\int_{\mathring C}\r)|u\circ\mr_1(z)|^qdz.\nonumber
\end{eqnarray}
Since $\mr_1$ is bi-Lipschitz on $B(\boz^s, 1)\setminus\overline{\Delta}$ and $|\boz^s|<\fz$, by the H\"older inequality, we have 
\begin{equation}\label{equa:sum2}
\int_{B(\boz^s, 1)\setminus\overline{\Delta}}|u\circ\mr_1(z)|^qdz\leq C\lf(\int_{\boz^s}|u(z)|^pdz\r)^{\frac{q}{p}}.
\end{equation}
By  the H\"older inequality and a change of variable, we have 
\begin{eqnarray}\label{akobi}
\int_{\mathring A}|u\circ\mr_1(z)|^qdz&\leq&\lf(\int_{\mathring A}|u\circ\mr_1(z)|^p|J_{\mr_1}(z)|dz\r)^{\frac{1}{p}}\cdot\lf(\int_{\mathring A}\frac{1}{|J^{\frac{q}{p-q}}_{\mr_1}(z)|}dz\r)^{\frac{p-q}{p}}\nonumber\\&\leq&\lf(\int_{\boz^s}|u(z)|^pdz\r)^{\frac{q}{p}}\cdot\lf(\int_{\mathring A}\frac{1}{|J^{\frac{q}{p-q}}_{\mr_1}(z)|}dz\r)^{\frac{p-q}{p}}.
\end{eqnarray}
By (\ref{equa:ref11}), we have 
\begin{equation}
\int_{\mathring A}\frac{1}{|J_{\mr_1}(z)|^{\frac{q}{p-q}}}dz\leq C\int_0^{\frac{1}{2}}t^{(n-1)-\frac{(n-1)(s-1)q}{p-q}}dt<\fz,\nonumber
\end{equation}
whenever $\frac{1+(n-1)s}{n}<p<\fz$ and $1\leq q<\frac{np}{1+(n-1)s}$. Hence, we have
\begin{equation}\label{equa:sum3}
\int_{\mathring A}|u\circ\mr_1(z)|^qdz\leq C\lf(\int_{\boz^s}|u(z)|^pdz\r)^{\frac{q}{p}}.
\end{equation}
Next, via (\ref{equa:ref12}) and (\ref{equa:ref13}), we obtain the estimates
$$\int_{\mathring B}\frac{1}{|J_{\mr_1}(z)|^{\frac{q}{p-q}}}dz\leq C\int_0^{\frac{1}{2}}|x|^{(n-1)-\frac{(n-1)(s-1)q}{p-q}}d|x|<\fz$$
and
$$\int_{\mathring C}\frac{1}{|J_{\mr_1}(z)|^{\frac{q}{p-q}}}dz\leq C\int_0^{\frac{1}{2}}x_1^{(n-1)-\frac{(n-1)(s-1)q}{p-q}}dx_1<\fz, $$ 
whenever $\frac{1+(n-1)s}{n}<p<\fz$ and $1\leq q<\frac{np}{1+(n-1)s}$. By repeating the argument leading to (\ref{equa:sum3}), we obtain the following desired analogs of (\ref{equa:sum3}):
\begin{equation}\label{equa:sum4}
\int_{\mathring B}|u\circ\mr_1(z)|^qdz\leq C\lf(\int_{\boz^s}|u(z)|^pdz\r)^{\frac{q}{p}}
\end{equation}
and
\begin{equation}\label{equa:sum5}
\int_{\mathring C}|u\circ\mr_1(z)|^qdz\leq C\lf(\int_{\boz^s}|u(z)|^pdz\r)^{\frac{q}{p}},
\end{equation}
whenever $\frac{1+(n-1)s}{n}<p<\fz$ and $1\leq q<\frac{np}{1+(n-1)s}$. Hence, (\ref{equa:norm1}) follows.

To prove inequality (\ref{equa:norm2}), by Lemma \ref{QCcompo}, it suffices to show that
 \begin{equation}
 \int_{B(\boz^s, 1)\setminus\overline{\boz^s}}\frac{|D\mr_1(z)|^{\frac{pq}{p-q}}}{|J_{\mr_1}(z)|^{\frac{q}{p-q}}}dz<\fz.\nonumber
 \end{equation}
 Clearly
 \begin{equation}
 \int_{B(\boz^s, 1)\setminus\overline{\boz^s}}\frac{|D\mr_1(z)|^{\frac{pq}{p-q}}}{|J_{\mr_1}(z)|^{\frac{q}{p-q}}}dz= \int_{B(\boz^s, 1)\setminus\overline\Delta}\frac{|D\mr_1(z)|^{\frac{pq}{p-q}}}{|J_{\mr_1}(z)|^{\frac{q}{p-q}}}dz+ \int_{\Delta\setminus\overline{\boz^s}}\frac{|D\mr_1(z)|^{\frac{pq}{p-q}}}{|J_{\mr_1}(z)|^{\frac{q}{p-q}}}dz.\nonumber
 \end{equation}
First, by inequality (\ref{equa:lipschitz}), we have
\begin{equation}
\int_{B(\boz^s, 1)\setminus\overline\Delta}\frac{|D\mr_1(z)|^{\frac{pq}{p-q}}}{|J_{\mr_1}(z)|^{\frac{q}{p-q}}}dz<\fz.\nonumber
\end{equation}
Since $\Delta\setminus\overline{\boz_s}= A\cup B\cup C$ and $|\partial A|=|\partial B|=|\partial C|=0$, we have
\begin{eqnarray}\label{Summa3}
\int_{\Delta\setminus\overline{\boz_s}}\frac{|D\mr_1(z)|^{\frac{pq}{p-q}}}{|J_{\mr_1}(z)|^{\frac{q}{p-q}}}dz&=&\lf(\int_{\mathring A}+\int_{\mathring B}+\int_{\mathring C}\r)\frac{|D\mr_1(z)|^{\frac{pq}{p-q}}}{|J_{\mr_1}(z)|^{\frac{q}{p-q}}}dz.\nonumber
\end{eqnarray}
By (\ref{equa:ref11}), (\ref{equa:ref12}) and (\ref{equa:ref13}), we obtain
\begin{equation}
\int_{\mathring A}\frac{|D\mr_1(z)|^{\frac{pq}{p-q}}}{|J_{\mr_1}(z)|^{\frac{q}{p-q}}}dz\leq C\int_0^{\frac{1}{2}}t^{(n-1)-\frac{(n-1)(s-1)q}{p-q}}dt<\fz,\nonumber
\end{equation}
\begin{equation}
\int_{\mathring B}\frac{|D\mr_1(z)|^{\frac{pq}{p-q}}}{|J_{\mr_1}(z)|^{\frac{q}{p-q}}}dz\leq C\int_0^{\frac{1}{2}}|x|^{(n-1)-\frac{(n-1)(s-1)q}{p-q}}d|x|<\fz,\nonumber
\end{equation}
and
\begin{equation}
\int_{\mathring C}\frac{|D\mr_1(z)|^{\frac{pq}{p-q}}}{|J_{\mr_1}(z)|^{\frac{q}{p-q}}}dx\leq C\int_0^{\frac{1}{2}}x_1^{(n-1)-\frac{(n-1)(s-1)q}{p-q}}dx_1<\fz,\nonumber
\end{equation}
whenever $\frac{1+(n-1)s}{n}<p<\fz$ and $1\leq q<\frac{np}{1+(n-1)s}$. In conclusion, we have proved that $w\in W^{1,q}(B(\boz^s, 1))$ with the bound
$$\|w\|_{W^{1,q}(B(\boz^s, 1))}\leq C\|u\|_{W^{1,p}(\boz^s)}$$
whenever $\frac{1+(n-1)s}{n}<p<\fz$ and $1\leq q<\frac{np}{1+(n-1)s}$. Since $E_{\mr_1}(u)=w$ almost everywhere, the above also holds with $w$ replaced by $E_{\mr_1}(u)$. 

For an arbitrary $u\in W^{1, p}(\boz^s)$, by the density of $C_o^\fz(\rn)\cap W^{1,p}(\boz^s)$, we can find a sequence of functions $\{u_i\}_{i=1}^\fz\subset C_o^\fz(\rn)\cap W^{1,p}(\boz^s)$ and a subset $N\subset\boz^s$ with $|N|=0$ such that 
\begin{equation}\label{equa:limit0}
\lim_{i\to\fz}\|u_i-u\|_{W^{1,p}(\boz^s)}=0,
\end{equation} 
and for every $z\in\boz^s\setminus N$, 
\begin{equation}\label{equa;limit}
\lim_{i\to\fz}|u_i(z)-u(z)|=0.
\end{equation}
By the argument above, for every $u_i\in C_o^\fz(\rn)\cap W^{1,p}(\boz^s)$, we have $E_{\mr_1}(u_i)\in W^{1,q}(B(\boz^s, 1))$ and 
\begin{equation}\label{equa:normc}
\|E_{\mr_1}(u_i)\|_{W^{1,q}(B(\boz^s, 1))}\leq C\|u_i\|_{W^{1,p}(\boz^s)}
\end{equation}
with a constant $C$ independent of $u_i$. Since $\mr_1$ is locally bi-Lipschitz on $B(\boz^s, 1)\setminus\overline{\boz^s}$, we have $\mr_1(N)\subset B(\boz^s, 1)\setminus\overline{\boz^s}$ with $|\mr_1(N)|=0$. By the definition of $E_{\mr_1}(u_i)$ in (\ref{equa:E_r(u)}), the sequence $\{E_{\mr_1}(u_i)\}_{i=1}^{\fz}$ has a limit at every point $z\in B(\boz^s, 1)\setminus(N\cup\mr_1(N))$. Define
\begin{equation}\label{equa:definev}
v(z):=\left\{\begin{array}{ll}
\lim_{i\to\fz}E_{\mr_1}(u_i)(z)&\ \  {\rm if}\ \ z\in B(\boz^s, 1)\setminus(N\cup\mr_1(N)),\\
0,&\ \  {\rm if}\ \ z\in N\cup\mr_1(N).
\end{array}\right.
\end{equation}
Since $\{u_i\}_{i=1}^\fz$ is a Cauchy sequence in $W^{1,p}(\boz^s)$, the inequalities (\ref{equa:limit0}) and (\ref{equa:normc}) yields that $\{E_{\mr_1}(u_i)\}_{i=1}^{\fz}$ is also a Cauchy sequence in $W^{1, q}(B(\boz^s, 1))$. Hence $v\in W^{1,q}(B(\boz^s, 1))$ with 
\[\|v\|_{W^{1, q}(B(\boz^s, 1))}\leq C\|u\|_{W^{1,p}(\boz^s)}.\]
By definition, we conclude that $E_{\mr_1}(u)(z)=v(z)$ for every $z\in B(\boz^s, 1)\setminus(N\cup\mr_1(N))$. Since $|N\cup\mr_1(N)|=0$, we have $E_{\mr_1}(u)\in W^{1,q}(B(\boz^s, 1))$ with
\[\|E_{\mr_1}(u)\|_{W^{1,q}(B(\boz^s, 1))}=\|v\|_{W^{1,q}(B(\boz^s, 1))}\leq C\|u\|_{W^{1,p}(\boz^s)}.\]

\end{proof}

\begin{thm}\label{thm6}
Let $\boz^s\subset\rr^n$ be an outward cuspidal domain with the degree $s>1$. Then the reflection $\mr_2:\widehat{\rr^n}\to\widehat{\rr^n}$ over $\partial\boz^s$ induces a bounded linear extension operator from $W^{1,p}(\boz^s)$ to $W^{1,q}(\rr^n)$, whenever $\frac{1+(n-1)s}{2+(n-2)s}<p<\fz$ and $1\leq q< \frac{(1+(n-1)s)p}{1+(n-1)s+(s-1)p}$.
\end{thm}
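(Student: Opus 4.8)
The plan is to follow the proof of Theorem~\ref{thm5} essentially line by line, replacing $\mr_1$ by $\mr_2$ and the regions $A,B,C$ by $D,E$, and using the derivative bounds \eqref{equa:ref21} and \eqref{equa:ref22} in place of \eqref{equa:ref11}--\eqref{equa:ref13}. Since $\boz^s$ satisfies the segment condition, Lemma~\ref{lem:density} reduces matters to $u\in C_o^\fz(\rn)\cap W^{1,p}(\boz^s)$. I set $w:=u\circ\mr_2$ on $B(\boz^s,1)\setminus\overline{\boz^s}$ and $w:=u$ on $\overline{\boz^s}$; because $\mr_2$ is locally bi-Lipschitz on $B(\boz^s,1)\setminus(\boz^s\cup\{0\})$, the function $w$ is locally Lipschitz on $B(\boz^s,1)\setminus\{0\}$. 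Exactly as before, it suffices to bound the $L^q$-norms of $u\circ\mr_2$ and of $D(u\circ\mr_2)$ over $B(\boz^s,1)\setminus\overline{\boz^s}$ by $\|u\|_{W^{1,p}(\boz^s)}$, and since $\mr_2$ is bi-Lipschitz on $B(\boz^s,1)\setminus\overline{\Delta'}$ only the pieces $\mathring D$ and $\mathring E$ require attention.

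For the $L^q$-estimate I use H\"older and the change of variables precisely as in \eqref{akobi}, so that everything reduces to the finiteness of $\int 1/|J_{\mr_2}|^{q/(p-q)}\,dz$ over $\mathring D$ and $\mathring E$. In contrast to $\mr_1$, the Jacobian of $\mr_2$ does \emph{not} degenerate: by \eqref{equa:ref21} it is a positive constant on $D$, and by \eqref{equa:ref22} one has $|J_{\mr_2}|\sim_c 1$ on $E$. As $D$ and $E$ are bounded, these integrals are finite for \emph{every} pair $1\le q<p<\fz$, so the $L^q$-estimate holds with no restriction on the exponents.

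The range of exponents is therefore forced entirely by the gradient estimate. By Lemma~\ref{lem:pQc} it suffices to verify that $\int_{B(\boz^s,1)\setminus\overline{\boz^s}}|D\mr_2|^{pq/(p-q)}/|J_{\mr_2}|^{q/(p-q)}\,dz<\fz$. The contribution from $B(\boz^s,1)\setminus\overline{\Delta'}$ is finite by \eqref{equa:lipschitz}, and on $\mathring D$ both factors are bounded, so the only delicate piece is $\mathring E$. There \eqref{equa:ref22} gives $|D\mr_2|\ls|x|^{-(s-1)/s}$ and $|J_{\mr_2}|\sim_c1$, whence the integrand is comparable to $|x|^{-\frac{s-1}{s}\cdot\frac{pq}{p-q}}$. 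Writing $x\in\rr^{n-1}$ in polar coordinates and observing that for $|x|=\rho$ the slab $E$ has $t$-thickness of order $\rho^{1/s}$, the integral reduces to $\int_0^{(1/2)^s}\rho^{\,(n-2)+\frac1s-\frac{s-1}{s}\cdot\frac{pq}{p-q}}\,d\rho$. This converges exactly when $\frac{s-1}{s}\cdot\frac{pq}{p-q}<n-1+\frac1s=\frac{1+(n-1)s}{s}$, which rearranges to $q<\frac{(1+(n-1)s)p}{1+(n-1)s+(s-1)p}$; requiring that this upper bound exceed $1$ (so that an admissible $q\ge1$ exists) is precisely $p>\frac{1+(n-1)s}{2+(n-2)s}$. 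Hence the stated range is exactly the one making this integral converge.

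Finally, I pass from smooth $u$ to arbitrary $u\in W^{1,p}(\boz^s)$ by the Cauchy-sequence argument at the end of Theorem~\ref{thm5}: since $\mr_2$ is locally bi-Lipschitz on $B(\boz^s,1)\setminus(\boz^s\cup\{0\})$ it preserves null sets, so the limit of the extensions of an approximating sequence agrees almost everywhere with $E_{\mr_2}(u)$ and inherits the norm bound, and Proposition~\ref{cut-off} then produces the desired bounded linear extension operator. The main obstacle is the exponent bookkeeping on $\mathring E$: unlike $\mr_1$, whose degeneration lived in the Jacobian, here $|J_{\mr_2}|\sim_c1$ while the operator norm $|D\mr_2|$ blows up at the cusp tip, so I must track the anisotropic geometry of $E$ --- in particular the $\rho^{1/s}$ thickness of the slab --- carefully in order to land on the sharp threshold rather than a weaker sufficient condition.
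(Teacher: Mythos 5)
Your proposal follows essentially the same route as the paper's proof: the same reduction via density, the function $w$, and Proposition \ref{cut-off}; the same decomposition into $\mathring D$, $\mathring E$ and the bi-Lipschitz region outside $\Delta'$; the H\"older/change-of-variables step of \eqref{akobi} for the $L^q$-bound; and the integrability criterion $\int |D\mr_2|^{\frac{pq}{p-q}}/|J_{\mr_2}|^{\frac{q}{p-q}}\,dz<\fz$ on $\mathring E$, where your Fubini computation (slab thickness $\rho^{1/s}$ in $t$) is just the paper's iterated integral in \eqref{Summa3'} taken in the opposite order and lands on the identical threshold $q<\frac{(1+(n-1)s)p}{1+(n-1)s+(s-1)p}$. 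One minor point in your favor: you invoke Lemma \ref{lem:pQc}, which is the correct lemma for $q<p$, whereas the paper's text cites Lemma \ref{QCcompo} at that step even though the displayed condition is that of Lemma \ref{lem:pQc}.
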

\begin{proof}

Let $u\in C_o^\fz(\rn)\cap W^{1,p}(\boz^s)$ be arbitrary. We define a function $E_{\mr_2}(u)$ as in (\ref{equa:E_r(u)}) and another function $w$ by setting 
\begin{equation}\label{defn:funcw'}
w(z):=\left\{\begin{array}{ll}
u\circ\mr_2(z),&\ \  {\rm if}\ \ z\in B(\boz^s, 1)\setminus\overline{\boz^s},\\
u(z),&\ \  {\rm if}\ \ z\in \overline{\boz^s}.
\end{array}\right.
\end{equation}
We claim that $w\in W^{1,q}(B(\boz^s, 1))$ with 
 \[\|w\|_{W^{1,q}(B(\boz^s, 1))}\leq C\|u\|_{W^{1,p}(\boz^s)}\]
 for a constant $C>1$ independent of $u$. As in the proof of Theorem \ref{thm5}, it suffices to estimate the norm over the union of
$\Omega^s$ and $B(\Omega^s,1)\setminus \overline{\boz^s}$ and we are again reduced to estimating the norm over the second set in question. On this set, $w=u\circ\mr_2$ almost everywhere and hence it suffices to prove the inequality
\begin{equation}\label{equa:Norm1}
\lf(\int_{B(\boz^s, 1)\setminus\overline{\boz^s}}|u\circ\mr_2(z)|^qdz\r)^{\frac{1}{q}}\leq C\lf(\int_{\boz^s}|u(z)|^pdz\r)^{\frac{1}{p}}
\end{equation}
and the inequality
\begin{equation}\label{equa:Norm2}
\lf(\int_{B(\boz^s, 1)\setminus\overline{\boz^s}}|D(u\circ\mr_2)(z)|^qdz\r)^{\frac{1}{q}}\leq C\lf(\int_{\boz^s}|Du(z)|^pdz\r)^{\frac{1}{p}}.
\end{equation}
Now
\[B(\boz^s, 1)\setminus\overline{\boz^s}=(B(\boz^s, 1)\setminus\Delta')\cup(\Delta'\setminus\overline{\boz^s})\]
and $\Delta'\setminus\overline{\boz^s}=D\cup E$. Since 
\[|\partial\Delta'|=|\partial D|=|\partial E|=0,\] 
we have 
\begin{eqnarray}\label{equa:Sum1}
\int_{B(\boz^s, 1)\setminus\overline{\boz^s}}|u\circ\mr_2(z)|^qdz&=&\int_{B(\boz^s, 1)\setminus\overline{\Delta'}}|u\circ\mr_2(z)|^qdz\\
       & &+\lf(\int_{\mathring D}+\int_{\mathring E}\r)|u\circ\mr_2(z)|^qdz.\nonumber
\end{eqnarray}
Since $\mr_2$ is bi-Lipschitz on $B(\boz^s, 1)\setminus\overline{\Delta'}$ and $|\boz^s|<\fz$, by the H\"older inequality, we have 
\begin{equation}\label{equa:Sum2}
\int_{B(\boz^s, 1)\setminus\overline{\Delta'}}|u\circ\mr_2(z)|^qdz\leq C\lf(\int_{\boz^s}|u(z)|^pdz\r)^{\frac{q}{p}}.
\end{equation}
Since $|J_{\mr_2}(t, x)|\sim1$ on $\mathring E\cup\mathring D$, by (\ref{equa:ref21}) and (\ref{equa:ref22}), we conclude by computing as in (\ref{akobi}) that
\begin{eqnarray}\label{equa:Sum3}
\int_{\mathring E\cup\mathring D}|u\circ\mr_2(z)|^qdz\leq C\lf(\int_{\boz^s}|u(z)|^pdz\r)^{\frac{q}{p}},
\end{eqnarray}
whenever $\frac{1+(n-1)s}{2+(n-2)s}<p<\fz$ and $1\leq q<\frac{(1+(n-1)s)p}{1+(n-1)s+(s-1)p}$. By combining inequalities (\ref{equa:Sum1})-(\ref{equa:Sum3}), we obtain  inequality (\ref{equa:Norm1}).

To prove inequality (\ref{equa:Norm2}), by Lemma \ref{QCcompo}, it suffices to show that
\begin{equation}
\int_{B(\boz^s, 1)\setminus\overline{\boz^s}}\frac{|D\mr_2(z)|^{\frac{pq}{p-q}}}{|J_{\mr_2}(z)|^{\frac{q}{p-q}}}dz<\fz.\nonumber
\end{equation}
Trivially, 
\begin{equation}
\int_{B(\boz^s, 1)\setminus\overline{\boz^s}}\frac{|D\mr_2(z)|^{\frac{pq}{p-q}}}{|J_{\mr_2}(z)|^{\frac{q}{p-q}}}dz=\int_{B(\boz^s, 1)\setminus\overline{\Delta'}}\frac{|D\mr_2(z)|^{\frac{pq}{p-q}}}{|J_{\mr_2}(z)|^{\frac{q}{p-q}}}dz+\int_{\Delta'\setminus\overline{\boz^s}}\frac{|D\mr_2(z)|^{\frac{pq}{p-q}}}{|J_{\mr_2}(z)|^{\frac{q}{p-q}}}dz.\nonumber
\end{equation}
Since $\mr_2$ is bi-Lipschitz on $B(\boz^s, 1)\setminus\overline{\Delta'}$, we have 
\begin{equation}
\int_{B(\boz^s, 1)\setminus\overline{\Delta'}}\frac{|D\mr_2(z)|^{\frac{pq}{p-q}}}{|J_{\mr_2}(z)|^{\frac{q}{p-q}}}dz<\fz.\nonumber
\end{equation}
Since $\Delta'\setminus\overline{\boz^s}= D\cup E$, $|\partial D|=|\partial E|=0$, inequalities (\ref{equa:ref21}), (\ref{equa:ref22}) give
\begin{eqnarray}\label{Summa3'}
\int_{\Delta'\setminus\overline{\boz^s}}\frac{|D\mr_2(z)|^{\frac{pq}{p-q}}}{|J_{\mr_2}(z)|^{\frac{q}{p-q}}}dz&\leq&\int_{\mathring D}\frac{|D\mr_2(z)|^{\frac{pq}{p-q}}}{|J_{\mr_2}(z)|^{\frac{q}{p-q}}} dz+\int_{\mathring E}\frac{|D\mr_2(z)|^{\frac{pq}{p-q}}}{|J_{\mr_2}(z)|^{\frac{q}{p-q}}} dz\\                                                                                                                                        &\leq& C\int_{0}^{\frac{1}{2}}\int_{t^{s}}^{\left(\frac{1}{2}\right)^{s}} |x|^{(n-2)-\frac{(s-1)pq}{s(p-q)}}d|x|dt+C\nonumber\\
									                                                                                  &\leq&C\int_{0}^{\frac{1}{2}}t^{(n-1)s-\frac{(s-1)pq}{p-q}}dt+C<\fz,\nonumber
\end{eqnarray}
whenever $\frac{1+(n-1)s}{2+(n-2)s}<p<\fz$ and $1\leq q<\frac{(1+(n-1)s)p}{1+(n-1)s+(s-1)p}$. In conclusion, we have proved that $w\in W^{1,q}(B(\boz^s, 1))$ with the bound
$$\|w\|_{W^{1,q}(B(\boz^s, 1))}\leq C\|u\|_{W^{1,p}(\boz^s)}$$
whenever $\frac{1+(n-1)s}{2+(n-2)s}<p<\fz$ and $1\leq q<\frac{(1+(n-1)s)p}{1+(n-1)s+(s-1)p}$. Since $E_{\mr_2}(u)=w$ almost everywhere, the above also holds with $w$ replaced by $E_{\mr_2}(u)$. Hence, we may complete the proof by following the argument of the proof of Theorem \ref{thm5}.

\end{proof}

\subsection{Proof of Theorem \ref{thm:comp}}
We begin with a useful observation.
\begin{lem}\label{lem:reftwo}
Let $1<s<\fz$ and $1<p<\fz$. If there is a reflection $\mr:\widehat{\rn}\to\widehat{\rn}$ over $\partial\boz^s$ which induces a bounded linear extension operator from $W^{1,p}(\rn\setminus\overline{\boz^s})$ to $W^{1,p}(\rn)$, then $\mr\in W^{1, p}_{\rm loc}(G\cap\boz^s, \rn)$ and 
$$|D\mr(z)|^p\leq C|J_\mr(z)|$$
for almost every $z\in G\cap\boz^s$, where $G$ is a bounded open set containing $\partial\boz^s$.
\end{lem}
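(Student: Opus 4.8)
The plan is to derive both conclusions from the boundedness of the composition operator $u\mapsto u\circ\mr$ encoded in \eqref{uu2}, using Lemma \ref{QCcompo} as the bridge between the composition estimate and the pointwise distortion inequality. Let $U$ be the open neighborhood of $\partial\boz^s$ furnished by the hypothesis. Since $\mr$ is a self-homeomorphism of $\widehat{\rn}$ and $\fz\in\widehat{\rn}\setminus\overline{\boz^s}=\mr(\boz^s)$, the point $z_\fz:=\mr^{-1}(\fz)$ lies in the open set $\boz^s$, hence at positive distance from $\partial\boz^s$. I would fix a bounded open set $G$ with $\partial\boz^s\subset G\subset U$ and $z_\fz\notin\overline G$; then $\overline{G\cap\boz^s}$ is a compact subset of $\widehat{\rn}\setminus\{z_\fz\}$, so $\mr(G\cap\boz^s)\subset\rn\setminus\overline{\boz^s}$ is bounded. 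It then suffices to prove (i) $\mr\in W^{1,p}(G\cap\boz^s,\rn)$ and (ii) the pointwise inequality $|D\mr(z)|^p\le C|J_\mr(z)|$ for a.e.\ $z\in G\cap\boz^s$.

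For (i) I would test \eqref{uu2} against the coordinate functions. Choose $\eta\in C_c^\fz(\rn)$ with $\eta\equiv 1$ on a neighborhood of the compact set $\overline{\mr(G\cap\boz^s)}$, and for $j\in\{1,\dots,n\}$ set $u_j(y):=\eta(y)\,y_j$. Each $u_j$ is smooth with compact support, so $u_j\in W^{1,p}(\rn\setminus\overline{\boz^s})$, and \eqref{uu2} provides a representative $\tilde v_j\in W^{1,p}(U)$ with $\tilde v_j=u_j\circ\mr$ on $U\cap\boz^s$. On $G\cap\boz^s$ we have $\eta(\mr(z))=1$, whence $u_j\circ\mr(z)=\mr_j(z)$, the $j$-th component of $\mr$. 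Thus $\mr_j=\tilde v_j|_{G\cap\boz^s}\in W^{1,p}(G\cap\boz^s)$ for every $j$, which gives (i) and in particular $\mr\in W^{1,1}_{\rm loc}(G\cap\boz^s,\rn)$.

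For (ii) the principal obstacle is that \eqref{uu2} controls the full inhomogeneous $W^{1,p}$ norm, whereas Lemma \ref{QCcompo} requires the homogeneous composition estimate $\int|D(u\circ f)|^p\le C\int|Du|^p$. I would resolve this by localizing on the target side. Fix any ball $B=B(y_0,r)$ with $r\le1$ and $\overline B\subset\mr(G\cap\boz^s)$, and set $f:=\mr|_{\mr^{-1}(B)}\colon\mr^{-1}(B)\to B$, which by (i) is a homeomorphism in $W^{1,1}_{\rm loc}$. Given a locally Lipschitz $u$ on $B$, subtract its mean $\bar u_B$ and, using that $B$ is a ball, extend $u-\bar u_B$ to $\tilde u\in W^{1,p}(\rn)$ with $\tilde u=u-\bar u_B$ on $B$. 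Rescaling to the unit ball together with the Poincar\'e inequality shows that the extension can be taken to satisfy the scale-invariant bound $\|\tilde u\|_{W^{1,p}(\rn)}\le C_n\|Du\|_{L^p(B)}$, valid uniformly for all $r\le1$. Since $\tilde u\circ\mr=(u-\bar u_B)\circ\mr$ on $\mr^{-1}(B)$ and the gradient ignores the additive constant, applying \eqref{uu2} to $\tilde u$ gives
\[
\int_{\mr^{-1}(B)}|D(u\circ\mr)|^p\,dz\le\int_{U\cap\boz^s}|D(\tilde u\circ\mr)|^p\,dz\le C\|\tilde u\|_{W^{1,p}(\rn\setminus\overline{\boz^s})}^p\le C\int_B|Du|^p\,dy,
\]
where $C$ depends only on $n$, $p$ and the operator norm in \eqref{uu2}, but not on $B$.

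This last inequality is precisely assumption (1) of Lemma \ref{QCcompo} for the map $f$, so conclusion (2) of that lemma yields $|D\mr(z)|^p\le C|J_\mr(z)|$ for a.e.\ $z\in\mr^{-1}(B)$, with the same uniform constant $C$. To finish, I would cover the open set $\mr(G\cap\boz^s)$ by countably many balls $B_k$ of radius at most $1$ with $\overline{B_k}\subset\mr(G\cap\boz^s)$; since the exceptional set attached to each $B_k$ is null and $\bigcup_k\mr^{-1}(B_k)=\mr^{-1}\!\big(\mr(G\cap\boz^s)\big)=G\cap\boz^s$, the distortion inequality holds for a.e.\ $z\in G\cap\boz^s$, completing the proof. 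I expect the delicate point to be the scale-invariant extension from balls together with the mean subtraction, since this is exactly what upgrades the inhomogeneous bound of \eqref{uu2} to the homogeneous estimate needed by Lemma \ref{QCcompo} while keeping the constant independent of the ball.
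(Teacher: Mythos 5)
Your proof is correct, and it shares the paper's backbone --- obtain the Sobolev regularity of $\mr$ from the extension hypothesis, convert the inhomogeneous bound \eqref{uu2} into a homogeneous gradient-to-gradient estimate by subtracting a mean and extending the test function into $\rn\setminus\overline{\boz^s}$, then invoke Lemma \ref{QCcompo} --- but you implement the key step with a different localization, which changes the tools required. The paper fixes a single Lipschitz domain $G$ with $\overline{\boz^s}\subset G$ and $\partial G\subset U$, notes that the shell $G\setminus\overline{\boz^s}$ is an $(\epsilon,\delta)$-domain, and extends $u-u_{G\setminus\overline{\boz^s}}$ from that shell to all of $\rn\setminus\overline{\boz^s}$ by Jones' theorem, using the Poincar\'e inequality on $(\epsilon,\delta)$-domains (\cite{BKmrt, SStams}) to get the homogeneous bound; \eqref{uu2} and Lemma \ref{QCcompo} are then applied once, globally. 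You instead localize on the image side: balls with $\overline B\subset\mr(G\cap\boz^s)$, the elementary scale-invariant extension-plus-Poincar\'e for balls, one application of \eqref{uu2} per ball, Lemma \ref{QCcompo} for each restriction $\mr|_{\mr^{-1}(B)}$, and a countable cover to conclude almost everywhere on $G\cap\boz^s$. Your route is more elementary (no Jones extension, no Poincar\'e inequality on $(\epsilon,\delta)$-domains), and it is tighter on a point the paper passes over: in Lemma \ref{QCcompo} the test functions and the right-hand integral must live on the image of the homeomorphism, and $\mr(G\cap\boz^s)$ need not coincide with the shell $G\setminus\overline{\boz^s}$ carrying the paper's test functions; moreover the paper's inclusion $G\cap\boz^s\subset U\cap\boz^s$ tacitly requires $\boz^s\subset U$. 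Your arrangement $\partial\boz^s\subset G\subset U$ with $\mr^{-1}(\fz)\notin\overline G$ sidesteps both issues, at the modest price of the covering argument. Two routine points deserve a sentence in a final write-up: for a locally Lipschitz $u$ on $B$ with merely $Du\in L^p(B)$ the mean $\bar u_B$ is meaningful (the estimate is trivial when $\int_B|Du|^p=\fz$, and otherwise the Poincar\'e inequality on the ball gives $u\in L^p(B)$, or one can average over a compactly contained ball), and the constant in conclusion $(2)$ of Lemma \ref{QCcompo} depends only on the constant in assumption $(1)$, which is exactly what makes your cover yield a single constant on all of $G\cap\boz^s$.
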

\begin{proof}
Let $\mr:\widehat{\rn}\to\widehat{\rn}$ be a reflection over $\partial\boz^s$ which induces a bounded linear extension operator from $W^{1,p}(\rn\setminus\overline{\boz^s})$ to $W^{1,p}(\rn)$. Then there exists a bounded open set $U$ containing $\partial\boz^s$ so that the function 
\begin{equation}
E_{\mr}(u)(z):=\left\{\begin{array}{ll}\label{equa:T_r(u)}
u\circ\mr(z),&\ {\rm for}\ z\in U\cap\boz^s,\\
0,&\ {\rm for}\ z\in\partial\boz^s,\\
u(z),&\ {\rm for}\ z\in U\setminus\overline{\boz^s}
\end{array}\right.
\end{equation}
belongs to $W^{1,p}(U)$ and satisfies
$$\|E_\mr(u)\|_{W^{1,p}(U)}\leq C\|u\|_{W^{1,p}(U\setminus\boz^s)}$$
for a positive constant $C$ independent of $u$. It follows that $\mr\in W^{1, p}_{\rm loc}(U\cap\boz^s, \rn)$. We employ an idea from \cite{DP} and pick a Lipschitz domain $G$ so that $\overline{\boz^s}\subset G$ and $\partial G\subset U$. Since $G$ is Lipschitz and contains the closure of $\boz^s$, the geometry of $\boz^s$ easily yields that $G\setminus\overline{\boz^s}$ is an $(\epsilon, \delta)$-domain for some positive $\epsilon, \delta$. Since $u-u_{G\setminus\overline{\boz^s}}\in W^{1,p}(G\setminus\overline{\boz^s})$ and $(\epsilon, \delta)$-domains are $(p, p)$-extension domains, we find a function $v\in W^{1,p}(\rn\setminus\overline{\boz^s})$ such that $v=u-u_{G\setminus\overline{\boz^s}}$ on $G\setminus\overline{\boz^s}$ and 
\begin{equation}\label{palautus0}
\|v\|_{W^{1,p}(\rn\setminus\overline{\boz^s})}\leq C\|u-u_{G\setminus\overline{\boz^s}}\|_{W^{1,p}(G\setminus\overline{\boz^s})}.
\end{equation}
Next, since $G\setminus\overline{\boz^s}$ is a bounded $(\epsilon, \delta)$-domain,  we have
\begin{equation}\label{palautus2}
\int_{G\setminus\overline{\boz^s}}|u(z)-u_{G\setminus\overline{\boz^s}}|^pdz\leq C\int_{G\setminus\overline{\boz^s}}|Du(z)|^pdz,
\end{equation}
see \cite{BKmrt, SStams}. By our assumption, (\ref{palautus0}) and (\ref{palautus2}), we have 
 \begin{eqnarray}\label{palautus1}
 \|v\circ \mr\|_{W^{1,p}(G\cap\boz^s)}&\leq& \|v\circ\mr\|_{W^{1,p}(U\cap\boz^s)}\nonumber\\
                                                      &\leq& C\|u-u_{G\setminus\overline{\boz^s}}\|_{W^{1,p}(G\setminus\overline{\boz^s})}\leq C\|Du\|_{L^p(G\setminus\overline{\boz^s})}.\nonumber
 \end{eqnarray}
 It is easy to check that $v\circ\mr=E_\mr(v)$ on $G\cap\boz^s$ and that $Du=Dv$ almost everywhere on $G\setminus\overline{\boz^s}$. Hence, we have
\begin{eqnarray}\label{equa:Lpex}
\int_{G\cap\boz^s}|DE_\mr(v)(z)|^pdz\leq C\int_{G\setminus\overline{\boz^s}}|Du(z)|^pdz.\nonumber
\end{eqnarray}
Since $u\in W^{1,p}(\boz^s)$ is arbitrary, Lemma \ref{QCcompo} gives the asserted inequality. 
\end{proof}

We are now ready to prove Theorem \ref{thm:comp}.

\begin{proof}[Proof of Theorem \ref{thm:comp}]
Fix $1<s<\fz$. It is easy to check that $\rr^n\setminus\overline{\boz^s}$ is an $(\epsilon,\delta)$-domain, for some positive constants $\epsilon$ and $\delta$. Hence, by \cite{Jones}, $\rr^n\setminus\overline{\boz^s}$ is a $(p, p)$-extension domain, for every $p\in[1, \fz)$.

We begin by showing that the reflection $\mr_1$ induces a bounded linear extension operator from $W^{1,p}(\rn\setminus\overline{\boz^s})$ to $W^{1,p}(\rn)$, whenever $1\leq p\leq n-1$. Define the domain $\Delta$ as in (\ref{equa:delta}) and the domain $\boz^s_1$ as in (\ref{subdomain}).
By (\ref{HOMEOMOR'}), the formula of the reflection $\mr_1$ on $\boz^s_1$ is
\begin{equation}\label{REFLEC}
\mr_1(t, x)=\left\{\begin{array}{ll}
\lf(-t, \frac{6x}{t^{s-1}}\r),&\ \  {\rm if}\ \ 0\leq |x|<\frac{1}{6}t^{s},\\
\lf(\frac{12|x|}{t^{s-1}}-3t, t\frac{x}{|x|}\r),&\ \ {\rm if}\ \ \frac{1}{6}t^{s}\leq |x|<\frac{1}{3}t^{s},\\
\lf(t, \frac{3(t^s-t)}{2t^s}x+\lf(\frac{3t}{2}-\frac{t^s}{2}\r)\frac{x}{|x|}\r),&\ \  {\rm if}\ \ \frac{1}{3}t^{s}\leq |x|<t^{s}.\\
\end{array}\right.
\end{equation}
For every $(t, x)\in \boz^s_1$ with $0< |x|<\frac{1}{6}t^{s}$, the resulting differential matrix of $\mr_1$ is 
\begin{eqnarray}
D_{\mr_1}(t, x)
&=&
\left(
 \begin{array}{ccccc}
-1 &~~ 0 &~~ 0 &~~ \cdots&~~ 0\\
(1-s)\frac{6x_1}{t^s}  &~~ \frac{6}{t^{s-1}}&~~ 0 &~~ \cdots&~~0\\
(1-s)\frac{6x_2}{t^s} &~~ 0 &~~ \frac{6}{t^{s-1}}  &~~\cdots &~~0\\
\vdots &~~\vdots &~~\vdots &~~\ddots &~~\vdots\\
(1-s)\frac{6x_{n-1}}{t^s} &~~ 0 &~~ 0 &~~\cdots &~~\frac{6}{t^{s-1}}
\end{array}
\right).  \nonumber
\end{eqnarray}
After a simple computation, for every $(t, x)\in\boz^s_1$ with $0<|x|<\frac{1}{6}t^s$, we have 
\begin{equation}\label{Distor1}
|D\mr_1(t, x)|=\frac{6}{t^{s-1}}\ {\rm and}\ |J_{\mr_1}(t, x)|=\lf(\frac{6}{t^{s-1}}\r)^{n-1}.
\end{equation}

For every $(t, x)\in \boz^s_1$ with $\frac{1}{6}t^{s}< |x|<\frac{1}{3}t^{s}$, the resulting differential matrix is 
\begin{eqnarray}\label{DifferA2}
D\mr_1(t, x)
&=&
\left(
 \begin{array}{ccccc}
\frac{12(1-s)|x|}{t^s}-3 &~~ \frac{12x_1}{|x|t^{s-1}} &~~ \frac{12x_2}{|x|t^{s-1}}&~~\cdots &~~ \frac{12x_{n-1}}{|x|t^{s-1}}\\
\frac{x_1}{|x|}  &~~ A^1_1(t, x)&~~ A^1_2(t, x) &~~\cdots&~~ A^1_{n-1}(t, x)\\
\frac{x_2}{|x|} &~~ A^2_1(t, x) &~~ A^2_2(t, x) &~~ \cdots &~~ A^2_{n-1}(t, x)\\
\vdots &~~\vdots &~~ \vdots &~~\ddots &~~\vdots\\
\frac{x_{n-1}}{|x|} &~~ A^{n-1}_1(t, x) &~~ A^{n-1}_2(t, x) &~~\cdots&~~ A^{n-1}_{n-1}(t, x)
\end{array}
\right),  \nonumber
\end{eqnarray}
where, for every $i, j\in\{1, 2, \cdots, n-1\}$, we have
\begin{equation}
A^i_j(t, x):=\left\{\begin{array}{ll}
\frac{t}{|x|}-\frac{tx_i^2}{|x|^3},&\ \  {\rm if}\ \ i=j,\\
\frac{-tx_ix_j}{|x|^3},&\ \ {\rm if}\ \ i\neq j.
\end{array}\right.\nonumber
\end{equation}
After a simple computation, for every $(t, x)\in\boz^s_1$ with $\frac{1}{6}t^s<|x|<\frac{1}{3}t^s$, we have
\begin{equation}\label{Distor2}
|D\mr_1(t, x)|\leq \frac{C}{t^{s-1}} \ {\rm and}\ |J_{\mr_1}(t, x)|\sim_c \lf(\frac{1}{t^{s-1}}\r)^{n-1}.
\end{equation}

 For every $(t, x)\in \boz^s_1$ with $\frac{1}{3}t^{s}<|x|<t^{s}$, the resulting differential matrix is 
\begin{eqnarray}
D\mr_1(t, x)
&=&
\left(
 \begin{array}{ccccc}
1 &~~ 0 &~~ 0 &~~\cdots&~~0\\
A^1_t(t, x)  &~~ A^1_1(t, x)&~~ A^1_2(t, x) &~~\cdots&~~A^1_{n-1}(t, x)\\
A^2_t(t, x) &~~ A^2_1(t, x) &~~ A^2_2(t, x) &~~\cdots&~~A^2_{n-1}(t, x)\\
\vdots &~~\vdots&~~\vdots&~~\ddots&~~\vdots\\
A^{n-1}_t(t, x) &~~ A^{n-1}_1(t, x)&~~ A^{n-1}_2(t, x) &~~\cdots&~~ A^{n-1}_{n-1}(t, x)
\end{array}\nonumber
\right), \\ \nonumber
\end{eqnarray}
where, for every $i, j\in \{1, 2, \cdots, n-1\}$, we have 
\begin{equation}
A^i_j(t, x):=\left\{\begin{array}{ll}
\lf(\frac{3}{2}-\frac{3}{2t^{s-1}}\r)+\lf(\frac{3t}{2}-\frac{t^s}{2}\r)\lf(\frac{1}{|x|}-\frac{x_i^2}{|x|^3}\r),&\ \  {\rm if}\ \ i=j,\\
-\lf(\frac{3t}{2}-\frac{t^s}{2}\r)\frac{x_ix_j}{|x|^3},&\ \ {\rm if}\ \ i\neq j.
\end{array}\right.\nonumber
\end{equation}
and
\begin{equation}
A^i_t(t, x):=(s-1)\frac{3x_i}{2t^s}+\lf(\frac{3}{2}-\frac{s}{2}t^{s-1}\r)\frac{x_i}{|x|}.\nonumber
\end{equation}
After a simple computation, for every $(t, x)\in\boz^s_1$ with $\frac{1}{3}t^s<|x|<t^s$, we have
\begin{equation}\label{Distor3}
|D\mr_1(t, x)|\leq \frac{C}{t^{s-1}}\ {\rm and}\ |J_{\mr_1}(t, x)|\sim_c\lf(\frac{1}{t^{s-1}}\r)^{n-1}.
\end{equation}

By combining (\ref{Distor1}), (\ref{Distor2}) and (\ref{Distor3}), we conclude that 
\begin{equation}\label{equa:pdist}
|D\mr_1(z)|^p\leq C|J_{\mr_1}(z)|
\end{equation}
for almost every $z\in \Delta\cap\boz^s$.
By the same inequalities, since $\mr_1$ is locally bi-Lipschitz on $B(\boz^s, 1)\setminus\overline\Delta$, for every $u\in C_o^\fz(\rn)\cap W^{1,p}(\rn\setminus\overline{\boz^s})$,  we have 
\begin{eqnarray}\label{equa:LpF}
\int_{\mr_1(B(\boz^s, 1)\setminus\overline{\boz^s})}|u\circ\mr_1(z)|^pdz&\leq&C\int_{\mr_1(B(\boz^s, 1)\setminus\overline{\boz^s})}|u\circ\mr_1(z)|^p|J_{\mr_1}(z)|dz\\
       &\leq&\int_{B(\boz^s, 1)\setminus\overline{\boz^s}}|u(z)|^pdz.\nonumber
\end{eqnarray}
Moreover, by Lemma \ref{QCcompo} and (\ref{equa:pdist}), we have 
\begin{equation}\label{equa:LpD}
\int_{B(\boz^s, 1)\setminus\overline{\boz^s}}|D(u\circ\mr_1)(z)|^pdz\leq \int_{\rn\setminus\overline{\boz^s}}|Du(z)|^pdz.
\end{equation}
Since $\rn\setminus\overline{\boz^s}$ satisfies the segment condition, (\ref{equa:LpF}) and (\ref{equa:LpD}) allow us to repeat the argument in the proof of Theorem \ref{thm5} so as to conclude that $\mr_1$ induces a bounded linear extension operator from $W^{1, p}(\rn\setminus\overline{\boz^s})$ to $W^{1,p}(\rn)$, whenever $1\leq p\leq n-1$.

Next, we show that there is no reflection over $\partial\boz^s$ which can induce a bounded linear extension operator from $W^{1,p}(\rn\setminus\overline{\boz^s})$ to $W^{1,p}(\rn)$, for any $n-1<p<\fz$. Let $n-1<p<\infty$ be fixed. Suppose that there exists a reflection $\mr:\widehat\rr^n\rightarrow\widehat\rr^n$ over $\partial\boz^s$, which induces a bounded linear extension operator from $W^{1,p}(\rr^n\setminus\overline{\boz^s})$ to $W^{1,p}(\rr^n)$. By Lemma \ref{lem:reftwo}, there exists an open set $G$ which contains $\partial\boz^s$ such that for almost every $z\in G\cap\boz^s$,  we have 
\begin{equation}
|D\mr(z|^p\leq C|J_\mr(z)|.\nonumber
\end{equation}
Then, by Lemma \ref{reduinverse}, for almost every $(t, x)\in \mr\lf(G\cap\boz^s\r)$, we have 
\begin{equation}\label{ineq:inver}
|D\mr(z)|^{\frac{p}{p+1-n}}\leq C|J_{\mr}(z)|.
\end{equation}
Let  $u\in C_o^{\fz}(\rn)\cap W^{1,p}(\boz^s)$ be arbitrary. By definition, $E_\mr(u)$ is bounded and continuous on $G$. Pick a Lipschitz domain $\widetilde G$ so that $\overline{\boz^s}\subset G$ and $\partial\widetilde G\subset G$. By Lemma \ref{QCcompo}, we have 
 \begin{equation}\label{equa:HNC}
 \|DE_{\mr}(u)\|_{L^{\frac{p}{p+1-n}}(\widetilde G)}\leq C\|Du\|_{L^{\frac{p}{p+1-n}}(\boz^s)}.
 \end{equation}
We conclude that $E_\mr(u)\in W^{1,\frac{p}{p+1-n}}(\widetilde G)$. Since $\widetilde G$ is a Lipschitz domain, \cite[Lemma 4.1]{DP} implies 
\begin{equation}\label{equa:POIN}
\int_{\widetilde G}|E_\mr(u)(z)-u_{\boz^s}|^{\frac{p}{p+1-n}}dz\leq C(\widetilde{G}, \boz^s)\int_{\widetilde G}|DE_\mr(u)(z)|^{\frac{p}{p+1-n}}dz.
\end{equation} 
Hence, we have 
\begin{equation}\label{equa:HNC1}
\|E_\mr(u)\|_{L^{\frac{p}{p+1-n}}(\widetilde G)}\leq C\lf(\|DE_\mr(u)\|_{L^{\frac{p}{p+1-n}}(\widetilde G)}+\|u\|_{L^{\frac{p}{p+1-n}}(\boz^s)}\r).
\end{equation}
By combining inequalities (\ref{equa:HNC}) and (\ref{equa:HNC1}), we obtain 
\begin{equation}
\|E_\mr(u)\|_{W^{1,\frac{p}{p+1-n}}(\widetilde G)}\leq \|u\|_{W^{1, \frac{p}{p+1-n}}(\boz^s)}.
\end{equation}
 
Since $C_o^{\fz}(\rn)\cap W^{1, \frac{p}{p+1-n}}(\boz^s)$ is dense in $W^{1, \frac{p}{p+1-n}}(\boz^s)$, for every function $u\in W^{1,\frac{p}{p+1-n}}(\boz^s)$, there exists a sequence of functions $u_i\in C_o^{\fz}(\rn)\cap W^{1,\frac{p}{p+1-n}}(\boz^s)$ such that
\begin{equation}\label{equa:appro}
\lim_{i\to\fz}\|u_i-u\|_{W^{1, \frac{p}{p+1-n}}(\boz^s)}=0,
\end{equation}
and for almost every $z\in\boz^s$, 
$$\lim_{i\to\fz}|u_i(z)-u(z)|=0.$$
By (\ref{equa:HNC}) and (\ref{equa:appro}), $\{E_\mr(u_i)\}_{i=1}^{\fz}$ is a Cauchy sequence in $W^{1,\frac{p}{p+1-n}}(\widetilde G)$. By the completeness of $W^{1, \frac{p}{p+1-n}}(\widetilde G)$, there exits a function $\omega\in W^{1,\frac{p}{p+1-n}}(\widetilde G)$ with 
$$\lim_{i\to\fz}\|w-E_\mr(u_i)\|_{W^{1, \frac{p}{p+1-n}}(\widetilde G)}=0$$
and $\omega(z)=u(z)$ for almost every $z\in\boz^s$. We define $E_\mr(u)(z):=\omega(z)$ on $\widetilde G$. By (\ref{equa:HNC}) and (\ref{equa:appro}) again, we have 
$$\|E_\mr(u)\|_{W^{1,\frac{p}{p+1-n}}(\widetilde G)}\leq C\|u\|_{W^{1,\frac{p}{p+1-n}}(\boz^s)}.$$
Hence, $\boz^s$ is a Sobolev $\lf(\frac{p}{p+1-n}, \frac{p}{p+1-n}\r)$-extension domain. This contradicts  the classical result that $\boz^s$ is not a $(q, q)$-extension domain, for any $1\leq q<\fz$, see \cite{Mazya} and references therein.
\end{proof}

\subsection{ Proof of Theorem \ref{thm:infty}}

\begin{proof}[Proof of Theorem \ref{thm:infty}]
Fix $1<s<\fz$. It is easy to see both $\boz^s$ and $\rr^n\setminus\overline{\boz^s}$ are uniformly locally quasiconvex. By \cite{HKT}, they are $(\fz,\fz)$-extension domains.

To begin, we show that the reflection $\mr_1$ induces a bounded linear extension operator from $W^{1,\fz}(\boz^s)$ to $W^{1, \fz}(\rn)$. Since $\boz^s$ is uniformly quasiconvex, every function in $W^{1, \fz}(\boz^s)$ has a Lipschitz representative. Without loss of generality, we assume every function in $W^{1,\fz}(\boz^s)$ is Lipschitz. Let $u\in W^{1, \fz}(\boz^s)$ be arbitrary. Define the extension $E_{\mr_1}(u)$ on $B(\boz^s, 1)$ as in (\ref{equa:E_r(u)}). Since $u\in W^{1,\fz}(\boz^s)$ is Lipschitz and $\mr_1$ is locally Lipschitz on $B(\boz^s, 1)\setminus(\boz^s\cup\{0\})$, we have $E_{\mr_1}(u)\in W^{1, 1}_{\rm loc}(B(\boz^s, 1)\setminus\overline{\boz^s})$. By (\ref{equa:ref11}), (\ref{equa:ref12}), (\ref{equa:ref13}) and the fact that $\mr_1$ is bi-Lipschitz on $B(\boz^s, 1)\setminus\overline{\Delta}$, for almost every $z\in B(\boz^s, 1)\setminus\overline{\boz^s}$, we have 
\begin{equation}
|DE_{\mr_1}(u)(z)|\leq C|Du(\mr_1(z))|.\nonumber
\end{equation}
This implies that
\[\|E_{\mr_1}(u)\|_{W^{1, \fz}(B(\boz^s, 1))}\leq C\|u\|_{W^{1,\fz}(\boz^s)}\]
as desired.

Next, we show that there does not exist a reflection over $\partial\boz^s$ which can induce a bounded linear extension operator from $W^{1, \fz}(\rn\setminus\overline{\boz^s})$ to $W^{1,\fz}(\rn)$. Define a function $u\in W^{1, \fz}(\rr^n\setminus\overline{\boz^s})$ by setting 
\begin{equation}\label{REFLEC}
u(t, x)=\left\{\begin{array}{ll}
1,&\ \  {\rm if}\ \ (t, x)\in\rr^n\setminus\overline{\boz^s}\ {\rm and}\ t\geq 1,\\
t,&\ \ {\rm if}\ \ (t, x)\in\rr^n\setminus\overline{\boz^s}\ {\rm and}\ 0<t<1,\\
0,&\ \  {\rm if}\ \ (t, x)\in\rr^n\setminus\overline{\boz^s}\ {\rm and}\ t\leq 0.\\
\end{array}\right.
\end{equation}
For every $t\in(0,1)$ fixed, we define a $2$-dimensional disk $D_t\subset\boz^s$ by setting
\begin{equation}
D_t:=\lf\{(t, x)\in\rr^n; |x|<t^s\r\}\nonumber
\end{equation}
 and define 
 \begin{center}
 $S_t:=\lf\{(t, x)\in\rr^n; |x|=2t^{s}\r\}$.
 \end{center}
 Suppose to the contrary that there exists a reflection $\mr:\widehat{\rr^n}\to\widehat{\rr^n}$ over $\partial\boz^s$ which induces a bounded linear extension operator from $W^{1, \fz}(\rr^n\setminus\overline{\boz^s})$ to $W^{1, \fz}(\rr^n)$. Define the function $E_\mr(u)$ on $B(\boz^s, 1)$ as in (\ref{equa:E_r(u)}). By the geometry of $\boz^s$ and the fact that $\mr$ is continuous and $\mr(z)=z$ whenever $z\in\partial\boz^s$, there exists a small enough $t_o\in(0,1)$ such that for every $t\in(0, t_o)$, there exists $(t, x_t)\in D_t$ with $E_\mr(u)((t, x_t))=0$ and there exists $(t, x'_t)\in S_t$ with $E_\mr(u)((t, x'_t))=t$ and $d((t, x_t), (t, x'_t))\leq 2t^s$. Hence for every $0<t<t_o$, we have 
 \begin{equation}
 |E_\mr(u)((t, x_t))-E_\mr(u)((t, x'_t))|\geq t\geq Cd^{\frac{1}{1+s}}((t, x_t), (t, x'_t)).\nonumber
 \end{equation}
This contradicts the assumption that $E_\mr(u)\in W^{1,\fz}(B(\boz_s, 1)):$ since $B(\boz^s, 1)$ is uniformly locally quasiconvex, a function in $W^{1,\fz}(B(\boz^s, 1))$ must have a Lipschitz representative.
\end{proof}

\noindent Pekka Koskela\\
  Zheng Zhu

\medskip

\noindent
Department of Mathematics and Statistics, University of Jyv\"askyl\"a, FI-40014, Finland.

\noindent{\it E-mail address}:  \texttt{pekka.j.koskela@jyu.fi}\\
                                 \texttt{zheng.z.zhu@jyu.fi}
\end{document}